\newtheorem{theorem}{Theorem}[section]
\newtheorem{lemma}[theorem]{Lemma}
\newtheorem{corollary}[theorem]{Corollary}
\newtheorem{proposition}[theorem]{Proposition}
\theoremstyle{definition}
\newtheorem{definition}[theorem]{Definition}
\newtheorem{convention}[theorem]{Convention}
\newtheorem{remark}[theorem]{Remark}
\newcommand{\del}{\partial}
\newcommand{\Z}{\mathbb{Z}}
\newcommand{\R}{\mathbb{R}}
\newcommand{\C}{\mathbb{C}}
\newcommand{\CP}{\mathbb{CP}}
\newcommand{\M}{\mathcal{M}}
\renewcommand{\H}{\mathcal{H}}
\newcommand{\Crit}{\mathrm{Crit}}
\newcommand{\Critv}{\mathrm{Critv}}
\renewcommand{\Im}{\mathrm{Im}}
\renewcommand{\O}{\mathcal{O}}
\title[Symplectic submanifolds from hyperelliptic Lefschetz fibrations]{Symplectic submanifolds in dimension $6$ from hyperelliptic Lefschetz fibrations}
\author[Takahiro Oba]{Takahiro Oba}
\address{Department of Mathematics, The University of Osaka, 1-1 Machikaneyama, Toyonaka, Osaka 560-0043, Japan}
\email{taka.oba@math.sci.osaka-u.ac.jp}
\subjclass[2020]{Primary 57R17; Secondary 57K43, 57R40, 57R65}
\keywords{Symplectic manifolds, Symplectic submanifolds, symplectic embeddings, Lefschetz fibrations}
\date{\today}
\begin{document}

\maketitle

\begin{abstract}
We provide a closed, simply connected, symplectic $6$-manifold having infinitely many codimension $2$ symplectic submanifolds. These are mutually homologous but homotopy inequivalent, and furthermore, they cannot admit complex structures. 
The key ingredient for the construction is hyperelliptic Lefschetz fibrations on $4$-manifolds.   
As a corollary, we present a similar result on symplectic submanifolds of codimension $2$ in higher dimensions. 
In the appendix, we give a proof of the well-known fact that all symplectic submanifolds of codimension $2$ in $(\CP^3, \omega_{\mathrm{FS}})$ of a fixed degree $\leq 3$ are mutually diffeomorphic. 
\end{abstract}

\maketitle

\section{Introduction}

The aim of this paper is to investigate how many connected symplectic submanifolds of codimension $2$, up to some equivalence relation, represent a fixed homology class. 
This problem is motivated by a well-known fact in complex geometry (\cite[Section 1]{FS}, \cite[Remark 2]{Totaro}): 
for a simply connected compact K\"{a}hler manifold $X$, its complex hypersurfaces representing a fixed homology class $\alpha \in H_{2n-2}(X; \Z)$ are unique up to smooth isotopy, especially up to diffeomorphism. 
Indeed, the cohomology class $\mathrm{PD}(\alpha) \in H^{2}(X; \Z)$ Poincar\'e dual to $\alpha$ can be realized as the first Chern class of a unique holomorphic line bundle $L_{\alpha}$ over $X$, if exists, and smooth complex hypersurfaces representing $\alpha$ correspond to transverse holomorphic sections of $L_{\alpha}$. 
Such sections form a connected subspace in the projective space $\mathbb{P}(H^0(X, L_{\alpha}))$, so one can find a smooth isotopy between two given hypersurfaces in question. 
Note that when $X$ is a complex surface, the adjunction formula completely determines the topology of complex curves representing $\alpha$; 
hence, it is interesting to ask about smooth isotopy classes rather than just diffeomorphism classes in this dimension. 

In symplectic geometry, finiteness of symplectic representatives has been studied particularly in dimension $4$. 
The adjunction formula restricts the diffeomorphism types of symplectic submanifolds in symplectic $4$-manifolds, which makes it interesting to consider the problem on representatives up to smooth isotopy as in the complex case. 
One of the celebrated results in this direction is Fintushel and Stern's construction \cite{FS} of an infinite family of homologous but smoothly non-isotopic symplectic surfaces in a simply connected $4$-manifold: see also \cite{Smith01}, \cite{EtPa04}, \cite{Vid}, \cite{EtPa05}, \cite{EtPa06}, \cite{PaPoVi}, \cite{HaPa}, \cite{EtPa08}. 

In higher dimensions, Auroux \cite{Aur} showed that for a given integral symplectic manifold $(M, \omega)$, symplectic submanifolds of $(M,k\omega)$ derived from Donaldson's construction \cite{Do} are unique up to symplectic isotopy for a sufficiently large $k>0$. 
As for non-uniqueness, the aforementioned results in $4$-dimension yield smoothly non-isotopic homologous symplectic submanifolds in higher dimensions.  
Indeed, take an infinite family $\{\Sigma_n\}_{n\in \Z_{>0}}$ of smoothly non-isotopic symplectic surfaces in a $4$-dimensional symplectic manifold $(X_1,\omega_1)$ which represent the same homology class and, for example, can be distinguished by the fundamental group of their complements (see \cite{EtPa08} and \cite{PaPoVi} for such an example). 
Then, in the Cartesian product of $(X_1, \omega_1)$ and an arbitrary symplectic manifold $(X_2, \omega_2)$, the family $\{\Sigma_n \times X_2\}_{n \in \Z_{>0}}$ provides the desired one. 

This turns the problem of symplectic representatives in higher dimensions more interesting up to diffeomorphism or even up to homotopy equivalence than up to smooth isotopy.
For some special cases, diffeomorphism types of representatives are known to be unique (see Appendix \ref{appendix}). 
Our main result shows that, in general, the finiteness of homotopy equivalence classes of symplectic representatives fails in dimension $6$. 

\begin{theorem}\label{thm: main}
There exists a simply connected closed symplectic $6$-manifold containing infinitely many connected symplectic submanifolds. 
These submanifolds are mutually homologous but homotopy inequivalent, and furthermore, they cannot support complex structures. 
\end{theorem}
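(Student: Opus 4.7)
Following the abstract's hint, my plan is to exhibit each symplectic $4$-submanifold as the total space of a hyperelliptic Lefschetz fibration, realised as a symplectic double branched cover of a Hirzebruch surface and embedded in a fixed $\CP^{1}$-bundle over that Hirzebruch surface. Concretely, I fix a Hirzebruch surface $Y = \mathbb{F}_{k}$ and a positive line bundle $L \to Y$, and set
\[
W := \mathbb{P}(L \oplus \O_{Y}) \longrightarrow Y,
\]
a closed, simply connected, projective $6$-manifold which serves as the fixed ambient symplectic manifold throughout.

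The core of the construction is an infinite family $\{B_{n}\}_{n \in \Z_{>0}}$ of connected symplectic curves in $Y$ satisfying $[B_{n}] = 2\,c_{1}(L) \in H_{2}(Y;\Z)$ but with strictly increasing genera $g(B_{n})$. Such families are standard in symplectic $4$-manifold topology, obtainable from Gompf's symplectic sum along auxiliary symplectic tori or from Fintushel--Stern type rim surgery on a fixed holomorphic representative; the key point is that the adjunction equality does \emph{not} hold in the symplectic category, so $[B_{n}]$ can be represented by symplectic surfaces of arbitrarily high genus. For each $n$, I take the associated symplectic double branched cover $X_{n} \to Y$; composition with the ruling $Y \to S^{2}$ endows $X_{n}$ with a hyperelliptic Lefschetz fibration over $S^{2}$, which is the 4-manifold-theoretic mechanism the abstract refers to. Realising $X_{n}$ inside $W$ as the symplectic zero locus $\{u^{2} = s_{n}(y)\, v^{2}\}$, for a symplectic section $s_{n}$ of (the pullback of) $L^{\otimes 2}$ vanishing along $B_{n}$, produces a symplectic codimension-$2$ submanifold whose homology class $[X_{n}] \in H_{4}(W;\Z)$ depends only on $L$, hence is independent of $n$.

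Two remaining properties must be checked. The $X_{n}$ are pairwise non-homotopy-equivalent because the double-cover formula $\chi(X_{n}) = 2\chi(Y) - \chi(B_{n})$ strictly decreases in $n$; simple connectivity of each $X_{n}$ follows from connectivity of $B_{n}$ together with $\pi_{1}(Y) = 1$ via a standard Van Kampen argument for branched double covers. To rule out complex structures, I arrange the surgery of the previous paragraph so that $X_{n}$ contains an embedded symplectic surface violating the complex adjunction genus bound (equivalently, so that the Seiberg--Witten basic classes of $X_{n}$ are incompatible with those of any minimal complex surface of the corresponding Kodaira dimension); the Enriques--Kodaira classification then excludes any K\"ahler, and in fact any complex, structure on the underlying smooth $4$-manifold.

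The principal obstacle is this last point: guaranteeing non-complexity of \emph{every} member of the family uniformly, rather than merely of infinitely many or of a generic one, requires a careful choice of surgery and controlled Seiberg--Witten computations along the entire sequence $X_{n}$. A secondary, more technical, matter is that in the embedding $X_{n} \hookrightarrow W$ the section $s_{n}$ is only symplectic, not holomorphic, so the symplectic nature of $X_{n}$ inside $W$ must be verified via the symplectic neighbourhood theorem applied to tubular neighbourhoods of the $B_{n}$ in $Y$, rather than by direct K\"ahler arguments.
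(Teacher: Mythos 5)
Your construction collapses at its first and most essential step: the claim that ``the adjunction equality does not hold in the symplectic category, so $[B_{n}]$ can be represented by symplectic surfaces of arbitrarily high genus'' is false. For any connected \emph{embedded} symplectic surface $B$ in a symplectic $4$-manifold $(Y,\omega_Y)$ one can choose an $\omega_Y$-compatible almost complex structure making $B$ an almost complex submanifold, and then $TY|_{B}\cong TB\oplus N_{B/Y}$ forces the adjunction equality $2g(B)-2=[B]^{2}-\langle c_{1}(TY),[B]\rangle$. Hence the genus of a connected symplectic representative of a fixed class $2c_{1}(L)$ in a Hirzebruch surface is uniquely determined; there is no family $\{B_{n}\}$ with fixed homology class and strictly increasing genus. (The symplectic Thom conjecture of Ozsv\'ath--Szab\'o, cited in the paper, reinforces this: symplectic surfaces are genus-minimizing.) The Gompf sum and Fintushel--Stern rim-surgery constructions you invoke produce homologous symplectic surfaces of the \emph{same} genus that are distinguished by smooth isotopy type or by the fundamental group of the complement, not by genus. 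Since your entire mechanism for distinguishing the double covers $X_{n}$ is $\chi(X_{n})=2\chi(Y)-\chi(B_{n})$, which is constant once the genus of $B_{n}$ is forced to be constant, the proposal cannot yield homotopy inequivalent submanifolds; the vagueness of the non-complexity step is then a secondary issue.

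For comparison, the paper avoids exactly this trap: it fixes the ambient $6$-manifold and homology class and varies the submanifold not by changing the genus of the branch data but by re-braiding it. Starting from the fiber sum of copies of Matsumoto's genus-$2$ fibration, one embeds the associated singular branch-type subspace in a $\CP^2$-bundle over $S^2$ (Theorem \ref{thm: embedding}, Proposition \ref{prop: symplectic}), and then performs an $n$-fold twisting over a loop $\alpha$ (Lemma \ref{lem: Luttinger}, a relative version of the Auroux--Donaldson--Katzarkov braiding), which replaces the monodromy data by its conjugate under $\tau_c^{n}$ while keeping the same symplectic form and the same homology class. The resulting submanifolds are blow-ups of $M(n)$ with $\pi_{1}\cong\Z\oplus\Z_{n}$, so they are distinguished by their fundamental groups rather than by Euler characteristics, and non-existence of complex structures follows from the Ozbagci--Stipsicz argument using $\pi_{1}$ and $b_{2}^{+}$. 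If you want to salvage your branched-cover framework, you would need a homotopy-type invariant of the covers that can change while the branch curve stays in a fixed class with fixed genus --- which is precisely the role the braid/monodromy twisting plays in the paper.
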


The next corollary immediately follows from the above theorem. 

\begin{corollary}\label{cor: main}
There exists a simply connected closed symplectic manifold of dimension greater than $4$ containing infinitely many connected symplectic submanifolds which are mutually homologous but homotopy inequivalent.
\end{corollary}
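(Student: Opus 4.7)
The plan is to bootstrap from Theorem~\ref{thm: main} by taking products with auxiliary simply connected symplectic manifolds of the appropriate dimension. Let $(M, \omega)$ and the family $\{N_n\}_{n \in \Z_{>0}}$ of codimension $2$ symplectic submanifolds be as in Theorem~\ref{thm: main}. The case of dimension $6$ is Theorem~\ref{thm: main} itself, so I fix an even dimension $2d \geq 8$ and take a closed simply connected symplectic manifold $(X, \omega_X)$ of dimension $2d-6$, for example $(\CP^{d-3}, \omega_{\mathrm{FS}})$. Then $(M \times X, \omega \oplus \omega_X)$ is a closed simply connected symplectic $2d$-manifold, and each $N_n \times X$ is a closed connected codimension $2$ symplectic submanifold.

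That the $N_n \times X$ all represent the same class in $H_{2d-2}(M \times X; \Z)$ is immediate from $[N_n] = [N_m] \in H_4(M; \Z)$ together with the K\"unneth formula, so the only step requiring care is to show that $N_n \times X$ and $N_m \times X$ are not homotopy equivalent when $n \neq m$. The natural strategy would be to transport through the product some homotopy invariant that already separates the $N_n$ in Theorem~\ref{thm: main}. Since $X$ is simply connected, $\pi_1(N_n \times X) \cong \pi_1(N_n)$, so if the $N_n$ are already distinguished by their fundamental groups then so are the products; for multiplicative invariants such as Euler characteristic or Betti numbers it would suffice to pick $X$ with the corresponding invariant non-zero, as illustrated by $\chi(\CP^k) = k+1$.

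The main obstacle is therefore the soft task of matching whichever homotopy invariant is exploited in the proof of Theorem~\ref{thm: main} with an $X$-factor that leaves it detectable on the product; once this is arranged, the family $\{N_n \times X\}_{n \in \Z_{>0}}$ satisfies all the required properties, yielding simply connected closed symplectic manifolds with the desired infinite families in every even dimension greater than $4$.
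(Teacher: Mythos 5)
Your proposal is correct and follows essentially the same route as the paper, which takes the product of the $6$-dimensional example with the simply connected symplectic manifold $(S^2)^m$ (you use $\CP^{d-3}$ instead, an immaterial difference). The invariant you worry about matching is indeed the fundamental group: in the proof of Theorem~\ref{thm: main} the submanifolds have $\pi_1 \cong \Z \oplus \Z_n$, so, exactly as you note, simple connectivity of the auxiliary factor preserves the distinction and your argument closes without further work.
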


The proof of Theorem \ref{thm: main} is based on the results of Siebert and Tian \cite{ST} and Fuller \cite{Fuller}, who proved that up to blow-up a hyperelliptic Lefschetz fibration is a double cover of a rational ruled surface branched along a smooth embedded surface. 
Given a $4$-manifold $M$ admitting a hyperelliptic Lefschetz fibration, by following the method of Siebert and Tian, we first construct a $4$-dimensional submanifold $Y$ in a blow-up $X$ of a $\CP^2$-bundle over $S^2$ in such a way that $Y$ is diffeomorphic to a blow-up of the given $M$. 
Then, we equip the $6$-manifold $X$ with a symplectic form $\omega$ for which $Y$ is symplectic. 
To produce the desired infinite family of symplectic submanifolds, one could apply this construction to an infinite family of hyperelliptic Lefschetz fibrations. 
But the problem is that even if one gets a family of homologous associated submanifolds $Y_n$ in a common $6$-manifold $X$, it is uncertain to have obtained one symplectic form on $X$ for which all $Y_n$'s are simultaneously symplectic.  
As a remedy for this problem, we shall introduce a twisting operation for the submanifold $Y \subset (X,\omega)$ associated to a Lefschetz fibration which preserves $(X, \omega)$ but changes the topology of $Y$. 
This can be seen as a relative version of the braiding construction introduced by Auroux, Donaldson and Katzarkov \cite{ADK}. 

Note that classically, a non-trivial genus-$1$ Lefschetz fibration can be embedded in the trivial $\CP^2$-bundle over $S^2 \cong \CP^1$, for example, by 
\begin{align*}
\{((a:b), (x:y:z)) \in \CP^1 \times \CP^2 \mid a^np_1(x,y,z)=b^np_2(x,y,z)\}
\end{align*}
 for some $n \in \Z_{>0}$ and generic cubic polynomials $p_1(x,y,z)$ and $p_2(x,y,z)$. 
Our construction is different from this algebraic construction: in fact, for the elliptic case, we will use a family of quartic polynomials.
We also would like to refer to a result by Ghanwat and Pancholi \cite{GhPa}, who showed that every closed orientable $4$-manifold can be smoothly embedded into $\CP^3$ by, similarly to us, employing fibration-like structures in the smooth category (see also \cite{GanNaSa} and \cite{PanPre} for related results). 

The outline of this paper is as follows:  
in Section \ref{section: preliminaries}, we review basic material related to this paper such as mapping class groups, braid groups and Lefschetz fibrations. 
We also give a brief explanation of K\"ahler forms on complex blow-ups. 
In Section \ref{section: submanifold}, we construct a $4$-dimensional submanifold $Y$ in a closed $6$-manifold $X$ for a given hyperelliptic Lefschetz fibration. 
We also discuss symplectic forms on $X$ for which $Y$ is symplectic and examine the homology class $[Y] \in H_4(X;\Z)$.  
Section \ref{section: main thm} contains the proof of Theorem \ref{thm: main}. 
We first introduce an operation for symplectic submanifolds constructed in Section \ref{section: submanifold} to change its topology and then give an infinite family of hyperelliptic Lefschetz fibrations. 
We finally prove the main theorem and its corollary at the end of this section using results shown in this paper. 
The last section is an appendix on the diffeomorphism types of symplectic submanifolds in $\CP^3$ with the Fubini--Study form $\omega_{\mathrm{FS}}$ of degree $\leq 3$. 

\section{Preliminaries}\label{section: preliminaries}

\subsection{Braid and mapping class groups}\label{section: braids}

This section will give a brief review of braid groups and mapping class groups. 
For more information, we refer the reader to \cite{FM}. 

Let $\mathrm{Conf}(F,n)$ be the configuration space of unordered $n$ distinct points in a smooth surface $F$. 
If $F$ is connected,  $\mathrm{Conf}(F,n)$ is path-connected, and hence the fundamental group $\pi_1(\mathrm{Conf}(F,n), [\bm{x}_0])$ is independent of the choice of the basepoint $[\bm{x_0}] \in \mathrm{Conf}(F,n)$ up to isomorphism. 
In this sense, let $B(F, n)$ denote $\pi_1(\mathrm{Conf}(F,n), [\bm{x}_0])$ for a connected surface $F$, omitting the basepoint.
If $F=\C$, the group $B(\C, n)$ is called the \textit{braid group} on $n$ strands. 
Set $\bm{x}_{0}=(x_1,\ldots, x_n)$ for the basepoint of the fundamental group $\pi_1(\mathrm{Conf}(\C,n), [\bm{x}_0])$.
The standard generators $\sigma_i$'s ($i=1,\ldots,n-1$) for $B(\C,n)$, known as the Artin generators, are loops in $\mathrm{Conf}(F,n)$ based at $[\bm{x}_0]$ which correspond to a motion switching $x_i$ and $x_{i+1}$ in counterclockwise direction, as in Figure \ref{fig: sigma_i}, and fixing the other $n-2$ points. 
Consider the case where $F$ is a $2$-sphere $S^2$, which we shall regard as the one-point compactification of $\C$. 
The group $B(S^2, n)$ is called the \textit{spherical braid group} on $n$ strands. 
In view of the inclusion $\C \hookrightarrow S^2$, we have a natural homomorphism $B(\C, n) \rightarrow B(S^2, n)$. 
Let $\hat{\sigma}_{i}$ denote the image of each $\sigma_{i}$ under this homomorphism. 
The group $B(S^2, n)$ has the following presentation: %(\cite{Birman}):
\begin{align*}
	\langle \ \hat{\sigma}_1, \ldots, \hat{\sigma}_{n-1} 
    \mid \ & \hat{\sigma}_i \hat{\sigma}_{i+1} \hat{\sigma}_i  =  \hat{\sigma}_{i+1} \hat{\sigma}_{i} \hat{\sigma}_{i+1} \mathrm{\ for \ all\ } i, \\
	&  \hat{\sigma}_i \hat{\sigma}_j  =  \hat{\sigma}_j \hat{\sigma}_i \mathrm{\ for \ } |i-j|>1, \\
	 & \hat{\sigma}_1  \cdots \hat{\sigma}_{n-1} \hat{\sigma}_{n-1} \cdots  \hat{\sigma}_{1}   =  1 \ \rangle. 
\end{align*} 

\begin{figure}[h]
	\centering
	\begin{overpic}[width=100pt,clip]{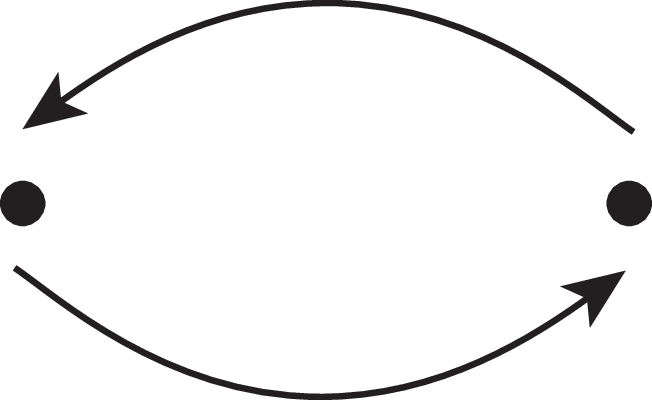}
 	\put(-14,28){$x_{i}$}
	\put(104,28){$x_{i+1}$}
	\end{overpic}
	\caption{Generator $\sigma_i$.}
	\label{fig: sigma_i}
\end{figure}

Next, let $F$ be a closed, connected, oriented, smooth surface possibly with boundary and $P$ the set of $n$ marked points on the interior of $F$.  
The \textit{mapping class group} $\M(F,n)$ of $F$ is the group of isotopy classes of orientation-preserving diffeomorphisms of $F$ which are supported in the interior of $F$ and which preserve $P$ setwise. 
When $P$ is the empty set, we omit the number of elements of $P$, namely $0$ from the notation. 
Although the composition of diffeomorphisms defines a natural product structure on $\M(F,n)$, a different group structure on it introduced below is suitable for our purpose. 

\begin{convention}\label{conv: product}
For $\varphi_1, \varphi_2 \in \M(F, n)$, a product of $\varphi_1$ and $\varphi_2$ is defined by 
\[
	\varphi_1 \varphi_2 \coloneqq  \varphi_2 \circ \varphi_1. 
\]
\end{convention}

It is easy to check that the new group structure is isomorphic to the initial one via the inverse map $\varphi \mapsto \varphi^{-1}$. 

If $F$ is a $2$-sphere $S^2$ with $n$ marked points, there is a homomorphism $\psi \colon B(S^2, n) \rightarrow \M(S^2, n)$, whose kernel is generated by the \textit{full-twist} $(\hat{\sigma}_1 \cdots \hat{\sigma}_{n-1})^n$. 
One can see that the image of this element is the Dehn twist along a simple closed curve surrounding all marked points, and $(\hat{\sigma}_1 \cdots \hat{\sigma}_{n-1})^{2n}=1$ in $B(S^2, n)$. 
In fact, the mapping class group $\M(S^2, n)$ has the presentation 
\begin{align*}
	\langle \  \overline{\sigma}_1, \ldots, \overline{\sigma}_n 
	 \mid \ \ & \overline{\sigma}_i \overline{\sigma}_{i+1} \overline{\sigma}_i  =  \overline{\sigma}_{i+1} \overline{\sigma}_{i} \overline{\sigma}_{i+1} \mathrm{\ for \ all\ } i, \\
	& \overline{\sigma}_i \overline{\sigma}_j  =  \overline{\sigma}_j \overline{\sigma}_i \mathrm{\ for \ } |i-j|>1, \\
	 &  \overline{\sigma}_1  \cdots \overline{\sigma}_{n-1} \overline{\sigma}_{n-1} \cdots  \overline{\sigma}_{1}   =  1, \\
	 &  (\overline{\sigma}_1  \cdots \overline{\sigma}_{n-1})^{n}=1 \ \rangle,  
\end{align*}
where each $\overline{\sigma}_i$ is the image of $\hat{\sigma}_i$ under the homomorphism 
\[
	\psi \colon B(S^2, n) \rightarrow \M(S^2, n). 
\]

Now suppose that $F$ is a closed, connected, oriented, smooth surface of genus $g$. 
Let $\iota$ denote the mapping class of an involution of $F$ with $2g+2$ fixed points. 
The centralizer $\H(F)$ of $\iota$ in $\M(F)$ is called the \textit{hyperelliptic mapping class group} of $F$. 
The group $\H(F)$ has a connection to $\M(S^2, 2g+2)$: 
the involution $\iota$ of $F$ yields a double branched covering $F \rightarrow S^2$ whose branch set consists of $2g+2$ points. 
This covering induces a surjective homomorphism 
\begin{equation} 
	\phi \colon \H(F) \rightarrow \M(S^2, 2g+2)
\end{equation} 
with the kernel generated by $\iota$. 
Let $c$ be a simple closed curve on $F$ which is preserved by $\iota$. 
Then, it is well known that the image $\overline{\tau}_{c} \coloneqq \phi(\tau_c)$ of the Dehn twist $\tau_c$ along $c$ is a conjugate of $\overline{\sigma}_1$ by some element if $c$ is a non-separating curve; it is a conjugate of $(\overline{\sigma}_1 \cdots \overline{\sigma}_{2h})^{4h+2}$ by some element if $c$ is a separating curve bounding a genus-$h$ subsurface of $F$. 
One can choose a distinguished lift $\hat{\tau}_c$ of $\overline{\tau}_c$ to $B(S^2, 2g+2)$ as follows. 
In principle, there are two lifts of each element of $\M(S^2, 2g+2)$. 
A lift of a conjugate of $\overline{\sigma}_1$ (resp. $(\overline{\sigma}_1 \cdots \overline{\sigma}_{2h})^{4h+2}$), however, is unique once we fix its lift to be $\hat{\sigma}_1$ (resp. $(\hat{\sigma}_1 \cdots \hat{\sigma}_{2h})^{4h+2}$) because $B(S^2, 2g+2)$ is a central extension of $\M(S^2, 2g+2)$. 
Hence, the distinguished lift $\hat{\tau}_c$ is uniquely determined.

\subsection{Lefschetz fibrations}\label{section: LF}

We next briefly review Lefschetz fibrations (see \cite[Section 8]{GS} for details). 
Let $M$ be a compact, connected, oriented, smooth $4$-manifold and $S$ a $2$-sphere $S^2$ or a closed $2$-disk $D^2$. 
\begin{definition}
A smooth map $f \colon M \rightarrow S$ is called a \textit{Lefschetz fibration} if it satisfies the following conditions: 
\begin{enumerate}
\item for each critical point $p$ of $f$, there exist local complex coordinates $(z_1,z_2)$ centered at $p$ (resp. a local complex coordinate $w$ centered at $f(p)$) compatible with the orientation of $M$ (resp. $S$) such that $f$ is locally written as 
\[
	w=f(z_1,z_2)=z_1^2+z_2^2\, ; 
\]
\item the map $f$ is injective on the set of critical points; 
\item $f^{-1}(\del S)=\del M$;
\item if $S=D^2$, the critical values of $f$ lie in the interior of $D^2$ and the restriction $f|_{f^{-1}(\del D^2)}$ has no critical points; 
\item no fiber contains a $(-1)$-sphere. 
\end{enumerate}
\end{definition}

Given a Lefschetz fibration $f \colon M \rightarrow S$, we say a fiber of $f$ is \textit{singular} if it contains a critical point of $f$; otherwise, it is said to be \textit{regular}. 
The regular fibers are diffeomorphic to a smooth closed orientable surface. 
Moreover, since $f$ is defined over $S^2$ or $D^2$, all regular fibers are connected by \cite[Proposition 8.1.9]{GS}. 
The \textit{genus} of $f$ is defined to be that of a regular fiber. 
A singular fiber of $f$ is called \textit{irreducible} if the complement of the critical point is connected; otherwise, it is called \textit{reducible}. 

For a Lefschetz fibration $f \colon M \rightarrow S$, let $\Crit(f)$ (resp. $\Critv(f)$) denote the set of critical points (resp. values) of $f$ and let $F_{b_{0}}$ be the fiber of $f$ over a base point $b_0$. 
Since $f$ is a smooth fiber bundle when restricted to $M \setminus f^{-1}(\Critv(f))$ we have the monodromy representation $\rho \colon \pi_1(S^2 \setminus \Critv(f), b_0) \rightarrow \M(\Sigma_g)$ of this smooth fiber bundle with respect to a fixed identification of $F_{b_0}$ with a closed Riemann surface $\Sigma_g$ of genus g.  
Notice that thanks to Convention \ref{conv: product}, $\rho$ becomes a homomorphism. 
We choose generators for $\pi_1(S^2 \setminus \Critv(f), b_0)$ as follows: 
set $\Critv(f)=\{b_1, \ldots, b_k\}$. 
Take smooth embedded paths $\gamma_1, \ldots, \gamma_k \colon [0,1] \rightarrow S^2$ based at $b_0$ such that $\gamma_j(1)=b_j$ and $\gamma_j^{-1}(\Critv(f))=\{1\}$ for each $j$; they are disjoint except at $b_0$ and appear in order by traveling counterclockwise around $b_0$.  
To each $\gamma_i$ is associated a loop $\hat{\gamma}_i$ based at $b_0$ which encloses $b_i$. 
We now easily see that $\hat{\gamma}_1, \ldots, \hat{\gamma}_k$ generate $\pi_1(S \setminus \Critv(f), b_0)$. 
The monodromy $\rho(\hat{\gamma}_i)$ is given by the Dehn twist $\tau_{c_{i}}$ along a simple closed curve $c_i$ in $F_{b_0}$, called a \textit{vanishing cycle}. 
Note that if $c_i$ is non-separating (resp. separating), the corresponding singular fiber $F_{b_i}$ is irreducible (resp. reducible). 
We notice that $\tau_{c_1} \cdots \tau_{c_k}=\rho(\hat{\gamma}_1  \cdots \hat{\gamma}_k) =1$ as $\hat{\gamma}_1 \cdots \hat{\gamma}_k=1$ when $S=S^2$.
Conversely, given a closed, connected, oriented and smooth surface $\Sigma$ of genus $g$ and an ordered collection of essential simple closed curves $(c_1, \ldots, c_k)$ on $F$ with $\tau_{c_1} \cdots  \tau_{c_k}=1$, 
one can construct a genus-$g$ Lefschetz fibration with the collection of vanishing cycles $(c_1,  \ldots, c_k)$. 
According to a classical result by Kas \cite[Theorem 2.4]{Kas}, the monodromy determines the isomorphism class of a Lefschetz fibration if its genus $g$ is at least $2$; for the case $g=1$, the result still holds true on the assumption that the fibration has at least one singular fiber (see also \cite{Moi} and \cite{Mat}). 
Here, two Lefschetz fibrations $f_1 \colon M_1 \rightarrow S^2$ and $f_2 \colon M_2 \rightarrow S^2$ are said to be \textit{isomorphic} if there are orientation-preserving diffeomorphisms $H \colon M_1 \rightarrow M_2$ and $h \colon S^2 \rightarrow S^2$ such that $h \circ f_1=f_2 \circ H$. 

In this paper, we mainly consider a special class of Lefschetz fibrations. 
A Lefschetz fibration is said to be \textit{hyperelliptic} if the image of its monodromy representation lies in $\H(\Sigma_g)$. 
Suppose that given a hyperelliptic Lefschetz fibration $f \colon M \rightarrow S^2$ of genus $g$, we have the collection $(c_1, \dots, c_k)$ of vanishing cycles. 
Then, as described in the previous section, to each $\tau_{c_i}$ is associated the distinguished element $\hat{\tau}_{c_i} \in B(S^2, 2g+2)$. 
As $\tau_{c_1} \cdots \tau_{c_k}=1$, we have 
\[
	\hat{\tau}_{c_1} \cdots \hat{\tau}_{c_k}=1 \mathrm{\ or \ } (\hat{\sigma}_1  \cdots \hat{\sigma}_{2g+1})^{2g+2} \mathrm{\ in \ } B(S^2, 2g+2). 
\]
We call this factorization the \textit{global braid monodromy} associated to $f$. 

\begin{remark}
The name of the global braid monodromy is derived from results of Siebert and Tian \cite{ST} and Fuller \cite{Fuller}. 
They showed that, up to blow-up, the total space of a hyperelliptic Lefschetz fibration is the double cover of a rational ruled surface branched along an embedded surface. 
This branch set is braided over $S^2$ with respect to the projection of the ruled surface, so the monodromy we defined corresponds to the one of this braided surface. 
(See \cite[Lemma 1.3]{ST} for details.) 
\end{remark}

\subsection{Symplectic forms on blow-ups}\label{section: Kaehler form}
To prove the main theorem, we need to endow a blow-up of a symplectic $6$-manifold with a symplectic structure. 
In our setup, the initial symplectic form can be always assumed to be K\"ahler near the blow-up locus. 
Thus, here we will give a quick review of K\"ahler forms on complex blow-ups: see \cite[Section 3]{Voisin} for details. 

Let $(X, \omega, J)$ be an almost K\"ahler manifold of dimension $2n$ and $Y$ a compact almost complex submanifold of codimension $k$. 
Suppose that the almost complex structure $J$ is integrable near $Y$. 
Take a collection of holomorphic charts $\{(U_{\alpha}, \varphi_{\alpha})\}_{\alpha}$ covering $Y$ such that 
$\varphi_{\alpha}(U_{\alpha} \cap Y)$ is written as the zero set $\{ x \in\Im (\varphi_{\alpha}) \mid f_{j}^{\alpha}(x)=0, j=1,\cdots, k\}$ by polynomials $f_{j}^{\alpha}$'s. 
Set $V_{\alpha} = \Im (\varphi_{\alpha})$ and define 
\[
	(\tilde{V}_{\alpha})_{Y}=\{([z], x) \in \CP^{k-1} \times V_{\alpha} \mid (f^{\alpha}_1(x), \ldots, f^{\alpha}_k(x)) \in \ell[z]\},
\]
where $\ell[z]$ is a $1$-dimensional subspace of $\C^k$ spanned by the non-zero vector $z \in \C^{k}$. 
It is easy to see that $(\tilde{V}_{\alpha})_{Y}$ is a smooth complex submanifold of $\CP^{k-1} \times V_{\alpha}$. 
The projection $\pi_\alpha \colon (\tilde{V}_{\alpha})_{Y} \rightarrow V_{\alpha}$ to the second factor is a biholomorphism away from $V_{\alpha}\cap \varphi_{\alpha}(Y)$. 
The fiber of $\pi_{\alpha}$ over every point of $V_{\alpha} \cap \varphi_{\alpha}(Y)$ agrees with $\CP^{k-1}$. 
We easily check that each pair of $(\tilde{V}_{\alpha})_{Y}$ and $(\tilde{V}_{\beta})_{Y}$ can be glued together by the matrix $((m_{ij})^{T})^{-1}$ of holomorphic functions defined by $f_j^{\alpha}=\sum_{j=1}^{k}m_{ij}f_i^{\beta}$.
Finally, we define the blow-up $\tilde{X}_{Y}$ of $X$ along $Y$ as the quotient space 
\[
	\tilde{X}_{Y} := (X \setminus Y) \cup (\cup_{\alpha}(\tilde{V}_{\alpha})_{Y})/ \sim
\]
using the above identifications. 
Let $\pi \colon \tilde{X}_Y \rightarrow X$ denote the projection and let $E_{Y}$ be the exceptional divisor. 
Note that away from $E_{Y}$, the projection $\pi$ is a diffeomorphism. 

Next we equip $\widetilde{X}_{Y}$ with a symplectic form. 
We begin by observing that $\pi^{*}\omega$ is degenerate along the kernel of the differential $D\pi|_{E_{Y}}$, away from which in contrast it is clearly non-degenerate. 
To obtain a symplectic form on the whole $\tilde{X}_{Y}$, consider a holomorphic line bundle $L \rightarrow \tilde{X}_{Y}$ corresponding to the divisor $-E_{Y}$. 
As $E_{Y}$ is isomorphic to the projective bundle $\mathbb{P}(N_{Y/X})$ associated to the normal bundle $N_{Y/X}$, the line bundle $L|_{E_{Y}}$ is isomorphic to the hyperplane bundle $\O_{\mathbb{P}(N_{Y/X})}(1)$ over $\mathbb{P}(N_{Y/X})$. 
Moreover, $L$ is trivial when restricted to $ E_{Y} \subset \tilde{X}_{Y}$.
For a hermitian metric $h$ on this bundle induced by a hermitian metric on $N_{Y/X}$, the Chern form $\frac{1}{2\pi i}\del \overline{\del} \log h$ is non-degenerate along the fiber of $\mathbb{P}(N_{Y/X})$. 
By using a partition of unity, one can extend the metric $h$ on $L|_{E_Y}$ to a hermitian metric $h_{L}$ on $L$ to be the flat metric outside a small neighborhood of $E_Y$ for a fixed trivialization of $L$ over $\tilde{X}_{Y} \setminus E_Y$. 
Then, the Chern form $\omega_L$ is a closed $2$-form on $\tilde{X}_{Y}$ supported in a small neighborhood of $E_Y$ and non-degenerate on $\ker(D\pi|_{E_{Y}})$. 
Since $Y$ is compact, for a small $\delta>0$ the $2$-form 
\[
	\pi^{*}\omega + \delta \omega_L
\]
is a symplectic form on $\tilde{X}_{Y}$. 
Notice that by construction a natural almost complex structure on $\tilde{X}_{Y}$ is compatible with this symplectic form.

\section{Submanifolds from hyperelliptic Lefschetz fibrations}\label{section: submanifold}

\subsection{Local models}\label{section: local model}

We begin the construction by producing a submanifold of a $6$-manifold, which is diffeomorphic to the total space of a hyperelliptic Lefschetz fibration with a single singular fiber up to blow-up; this submanifold will play a role of a local model. 
The following argument is inspired by \cite{ST}.

\subsubsection{Description of local models}
Let $D_{\varepsilon}$ be the closed disk in $\C$ centered at the origin of radius $\varepsilon$. 
For an integer $g \geq 2$, we take mutually distinct points $a_3, \ldots, a_{2g+2}$ with $|a_i|>\varepsilon$. 
Set 
\begin{align*}
	B_{\mathrm{irr}}=\left\{(s,(u:v)) \in D_{\varepsilon} \times \CP^1 \; \middle| \; (u^2-sv^2) \cdot \prod_{r=3}^{2g+2}(u-a_rv)=0\right\}
\end{align*}
and 
\begin{align*}
	B_{h,g-h}=\left\{(s,(u:v)) \in D_{\varepsilon} \times \CP^1 \; \middle| \; \prod_{r=1}^{2h+1}\left(u- e^{\frac{2\pi r i}{2h+1}}s^2 v\right) \cdot \prod_{r=2h+2}^{2g+2}(u-a_rv)=0\right\} 
\end{align*}
for any integer $h$ with $0<h \leq g/2$.
The former set $B_{\mathrm{irr}}$ is a non-singular complex curve in $D_{\varepsilon} \times \CP^1$ while the latter set $B_{h}$ is a singular complex curve with only one singular point at $(0, (0:1))$. 
For our purpose, these will be presented as subspaces of $D_{\varepsilon} \times \CP^2$.  
Define $Y^{(0)}_{\mathrm{irr}}$ and $Y^{(0)}_{h,g-h}$ by 
\begin{gather*}
	Y_{\mathrm{irr}}^{(0)}=\left\{(s,(x:y:z)) \in D_{\varepsilon} \times \CP^2 \; \middle| \; (x^2-sz^2) \prod_{r=3}^{2g+2}(x-a_rz)=y^2z^{2g}\right\}, \\
	Y_{h,g-h}^{(0)}=\left\{(s,(x:y:z)) \in D_{\varepsilon} \times \CP^2 \; \middle| \; \prod_{r=1}^{2h+1}\left(x- e^{\frac{2\pi r i}{2h+1}}s^2z\right) \prod_{r=2h+2}^{2g+2}(x-a_rz)=y^2z^{2g}\right\}. 
\end{gather*}
Neither of them is a smooth manifold. 
Indeed, the singular locus of $Y_{\mathrm{irr}}^{(0)}$ is $D_{\varepsilon} \times {(0:1:0)}$; that of $Y_{h,g-h}^{(0)}$ is $(D_{\varepsilon} \times {(0:1:0)}) \cup \{(0, (0:0:1))\}$. 
Blowing them up several times along the singular loci as described in Section \ref{section: resolution} below, we finally obtain the smooth proper transforms, denoted by $Y_{\mathrm{irr}}$ and $Y_{h,g-h}$, respectively. 
Note that $Y_{\mathrm{irr}}$ lies in $D_\varepsilon \times (\CP^2 \# (g+1)\overline{\CP}^2)$, and $Y_{h,g-h}$ lies in a blow-up $Bl(D_{\varepsilon}\times (\CP^2 \# (g+1)\overline{\CP}^2))$ of $D_{\varepsilon}\times (\CP^2 \# (g+1)\overline{\CP}^2)$. 
We will specify the diffeomorphism types of $Y_{\mathrm{irr}}$ and $Y_{h,g-h}$ in Section \ref{section: diffeo type}.

\subsubsection{Resolving singularities}\label{section: resolution}

We now explain how to obtain $Y_{\mathrm{irr}}$ and $Y_{h,g-h}$ from $Y^{(0)}_{\mathrm{irr}}$ and $Y^{(0)}_{h,g-h}$, respectively. 

\subsubsection*{The irreducible case. }

First consider the irreducible case. 
As we have observed, the singular points of $Y_{\mathrm{irr}}^{(0)}$ form $D_{\varepsilon} \times \{ (0:1:0) \}$, where we blow up $Y_{\mathrm{irr}}^{(0)}$ first. 
With the holomorphic chart $(\{(s, (x:y:z))\mid y=1\}, (s,x,z))$ of $D_{\varepsilon} \times \CP^2$,  the space $Y_{\mathrm{irr}}^{(0)}$ is written as $\{(s,x,z) \in D_{\varepsilon} \times \C^2 \mid (x^2-sz^2) \prod_{r=3}^{2g+2}(x-a_rz)-z^{2g}=0\}$, in which the singular locus corresponds to $\{(s,0,0) \in D_{\varepsilon} \times \C^2\}$. 
Set $U_0=D_{\varepsilon} \times \C^2$. 
Consider the blow-up map
$\pi_{1}: \tilde{U}_{0}\rightarrow U_0$ of $U_0$ along $\{(s,0,0) \in D_{\varepsilon} \times \C^2\}$ and let $Y_{\mathrm{irr}}^{(1)}$ denote the proper transform of $Y_{\mathrm{irr}}^{(0)}$, where 
\[
	\tilde{U}_0 =\{(s, (\zeta: \eta),(x,z)) \in D_{\varepsilon} \times \CP^{1} \times \C^2 \mid \zeta z= \eta x \}.
\]
Set $U_1=\{(s,(1:u), (v,uv)) \in \tilde{U}_0 \mid (s,u,v) \in D_{\varepsilon} \times \C^2\}$ and take the holomorphic chart $(U_1, (s,u,v))$. 
The preimage $\pi_{1}^{-1}(Y_{\mathrm{irr}}^{(0)})$ is expressed by the following equation on $U_{1}$: 
\begin{align*}
	 v^{2g} \left\{v^2(1-su^2)  \prod_{r=3}^{2g+2}(1-a_r u)-u^{2g} \right\}= 0. 
\end{align*} 
Hence, the proper transform $Y_{\mathrm{irr}}^{(1)}$ is given by 
\begin{align}
	 v^2(1-su^2) \prod_{r=3}^{2g+2}(1-a_r u)-u^{2g}= 0, \label{eqn: first blow-up}
\end{align} 
which is singular along the set $\{(s,(1:0), (0,0))\}$. 
It is easy to see that the singular locus of $Y_{\mathrm{irr}}^{(1)}$ is contained in $U_1$.  
Further blow up $U_1$ along this locus and denote the corresponding projection by $\pi_2 \colon \tilde{U}_1 \rightarrow U_1 \cong D_{\varepsilon} \times \C^2$, where $\tilde{U}_1 \subset D_{\varepsilon} \times \CP^1 \times \C^2$. 
Set $U_{2}=\{(s, (1:p), (q,pq)) \in \tilde{U}_1 \mid (s,p,q) \in D_\varepsilon \times \C^2\}$ and consider the holomorphic chart $(U_2, (s,p,q))$.  
With those coordinates, the proper transform $Y_{\mathrm{irr}}^{(2)}$ of $Y_{\mathrm{irr}}^{(1)}$ is defined by
\begin{align}
	 p^2(1-sq^2) \prod_{r=3}^{2g+2}(1-a_rq)-q^{2g-2} = 0 \label{eqn: second blow-up}
\end{align} 
on $U_2$. 
We see that $Y_{\mathrm{irr}}^{(2)}$ is singular along $\{(s, (1:0), (0,0))\}$ on $U_2$ unless $g=1$ while smooth outside $U_2$.  
Now comparing the equation (\ref{eqn: first blow-up}) with (\ref{eqn: second blow-up}), the induction on $g$ shows that $g+1$ times blowing up gives the smooth proper transform $Y_{\mathrm{irr}}$ of $Y_{\mathrm{irr}}^{(0)}$ in $D_{\varepsilon} \times (\CP^2 \# (g+1)\overline{\CP}^2)$.

\subsubsection*{The reducible case.} 

Next we discuss $Y_{h,g-h}^{(0)}$. 
Recall that it is singular along $D_{\varepsilon} \times \{(0:1:0)\}$ and at $(0, (0:0:1))$. 
The former can be resolved in the same way as $Y_{\mathrm{irr}}^{(0)}$ by $g+1$ point-blow-ups. 
To deal with the latter, take the holomorphic chart $(\{(s,(x:y:z)) \mid z=1\}, (s,x,y))$ of $ D_{\varepsilon} \times \CP^2$ around $(0,(0:0:1))$. 
In light of the identity $\prod_{r=1}^{2h+1}\left(x- e^{\frac{2\pi r i}{2h+1}}s^2 \right)=x^{2h+1}-s^{2(2h+1)}$, with these coordinates $Y_{h,g-h}^{(0)}$ is expressed as
\[
	\left\{(s,x,y) \in D_{\varepsilon} \times \C^2  \ \middle| \  (x^{2h+1}-s^{2(2h+1)}) \prod_{r=2h+2}^{2g+2}(x-a_r)-y^2=0  \right\} 
\]
around the point $(0, (0:0:1))$.
The affine curve $\{(x^{2h+1}-s^{2(2h+1)}) \prod_{r=2h+2}^{2g+2}(x-a_r)=0\}$ defined in a small neighborhood of $(0,0) \in D_\varepsilon \times \C$ has a unique singular point at the origin, which can be resolved by two blow-ups (see \cite[Exercise 7.2.4(b)]{GS} and its answer). 
We will make use of this fact: after two point-blow-ups, we get the proper transform defined by the equation 
\begin{align}
	t^{2(2h-1)}(u^{2h+1}-1) \prod_{r=2h+2}^{2g+2}(t^2u-a_{r})-v^2=0\label{eqn: irr}
\end{align}
in one chart $(t,u,v)$. 
Observe that the singular locus of this affine surface is $\{(0,u,0) \mid u \in \C\}$ and, combining other singularities in other charts, the singular locus of the proper transform is isomorphic to $\CP^1$.
Moreover, it can be checked that the blow-up along $\CP^1$ decreases the exponent of $t$ in front of the first parenthesis in (\ref{eqn: irr}) by $2$ and gives the proper transform the singular locus isomorphic to $\CP^1$ again. 
By the induction on $h$, further $2h-1$ blow-ups along $\CP^1$ desingularize $Y_{h,g-h}^{(0)}$. 
To sum up, the successive $g+3$ blow-ups at points and $2h-1$ blow-ups along $\CP^1$'s yield the smooth proper transform $Y_{h,g-h}$ of $Y_{h,g-h}^{(0)}$.

\subsubsection{Topology of $Y_{\mathrm{irr}}$ and $Y_{h,g-h}$}\label{section: diffeo type}

We shall here specify the diffeomorphism types of $Y_{\mathrm{irr}}$ and $Y_{h,g-h}$. 

\subsubsection*{The irreducible case}
Consider the meromorphic map 
\begin{align*}
D_{\varepsilon} \times (\CP^2 \# (g+1) \overline{\CP}^2) \rightarrow D_{\varepsilon} \times \CP^1
\end{align*}
induced by the map $D_{\varepsilon} \times \CP^2 \dashrightarrow D_{\varepsilon} \times \CP^1$, $(s, (x:y:z)) \mapsto (s,(x:z)) \in  D_{\varepsilon} \times \CP^1$.  
This yields the double branched cover $p_{\mathrm{irr}} \colon Y_{\mathrm{irr}} \rightarrow D_\varepsilon \times \CP^1$ branched along $B_{\mathrm{irr}}$, and the composition $\mathrm{pr}_1 \circ p_{\mathrm{irr}}$ is a genus-$g$ hyperelliptic Lefschetz fibration over $D_{\varepsilon}$ with only one irreducible singular fiber.

\subsubsection*{The reducible case}
In order to examine the reducible case, recall a resolution of the complex curve $B_{h,g-h}$ in $D_{\varepsilon} \times \CP^1$. 
It has an isolated singularity, which can be resolved by two blow-ups. 
Let $\pi_1$ and $\pi_2$ denote the first and second blow-ups, respectively, and $\pi \colon Bl(D_{\varepsilon} \times \CP^1) \rightarrow D_{\varepsilon} \times \CP^1$ their composition. 
The total transform $B'_{h,g-h}=\pi^{-1}(B_{h,g-h})$ of $B_{h,g-h}$ consists of the proper transform $\tilde{B}_{h,g-h}$ of $B_{h,g-h}$ and the two rational curves $\tilde{E}_1$ and $E_2$ with self-intersection numbers $-2$ and $-1$, respectively. 
Here $\tilde{E}_1$ is the proper transform of the exceptional curve $E_1$ of $\pi_1$, and $E_2$ is the exceptional curve of $\pi_2$.
See Figure \ref{fig: resolution} for the case $g=2$ and $h=1$.
The resolution of $B_{h,g-h}$ is essentially the same as that of the affine curve we used in the preceding discussion to obtain $Y_{h,g-h}$, which  leads to a meromorphic map from $Bl(D_{\varepsilon} \times (\CP^2 \# (g+1) \overline{\CP}^2))$ to $Bl(D_{\varepsilon} \times \CP^1)$. 
When restricted to $Y_{h,g-h}$, this map is the double cover $p_{h, g-h} \colon Y_{h,g-h} \rightarrow Bl(D_\varepsilon \times \CP^1)$ branched along $\tilde{B}_{h,g-h}$ and $\tilde{E}_1$. 
The composition $\mathrm{pr}_1 \circ \pi \circ p_{h,g-h}$ is not a Lefschetz fibration but a singular fibration obtained by blowing up at the critical point of a genus-$g$ hyperelliptic Lefschetz fibration $M_{h,g-h}$ with only one reducible singular fiber consisting of the two components of genus $h$ and $g-h$. 
This shows that $Y_{h,g-h}$ is diffeomorphic to $M_{h,g-h} \# \overline{\CP}^2$. 
(Notice that the resulting branched cover $p_{h,g-h}$ is the same as the one given in \cite[Section 1]{ST}.)

\begin{figure}[th]
\vspace{11pt}
		\begin{overpic}[width=300pt,clip]{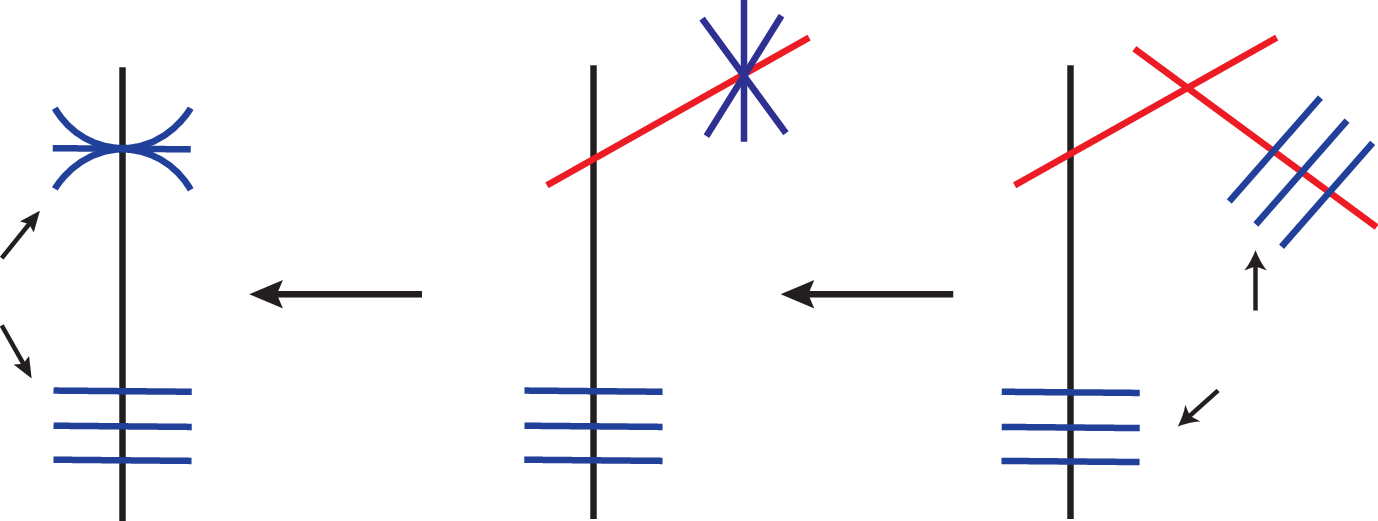}
	\put(15,103){$0\times \CP^1$}
	\put(-20,52){$B_{h,g-h}$}
	\put(265,33){$\tilde{B}_{h,g-h}$}
	\put(106,79){$E_1$}
	\put(105,62){$-1$}
	\put(207,79){$\tilde{E}_1$}
	\put(206,62){$-2$}
	\put(300,67){$E_2$}
	\put(298,54){$-1$}
	\put(70,60){$\pi_1$}
	\put(185,60){$\pi_2$}
	\end{overpic}
	\caption{Total transform $B'_{h,g-h}=\tilde{B}_{h,g-h} \cup \tilde{E}_1 \cup E_2$ of $B_{h,g-h}$ for the case $g=2$ and $h=1$. }
	\label{fig: resolution}
\end{figure}

\subsection{Gluing local models}\label{section: gluing}
Now that we have constructed local models, we shall glue them together according to the information about a given hyperelliptic Lefschetz fibration $f \colon M \rightarrow S^2$ of genus $g \geq 2$ or a non-trivial Lefschetz fibration of genus $1$. 
Our construction can be thought as a relative version of Siebert and Tian's construction \cite[Section 1]{ST}. 
Suppose that 
\begin{align*}
\Critv(f)=\{b_1, \ldots, b_k\}.
\end{align*} 
We may assume that $b_j$'s are the $k$-th roots of $1/2$, i.e. $b_j=(e^{2\pi i j}/2)^{1/k}$. 
Divide the base space $S^2$ of $f$ into two disks one of which, say $D_{+}$, contains the all critical values and over the other one of which, say $D_{-}$, the fibration $f$ is trivialized; hence the essential topological information of $f$ is condensed over $D_{+}$. 
In what follows, we will construct a singular space in $D_{+} \times \CP^2$ by gluing local models given in the previous subsection, and then extend it over the whole of a certain $\CP^2$-bundle over $S^2$. 
Finally, blow-ups as in Section \ref{section: resolution} will yield the desired embedded submanifold diffeomorphic to a blow-up of $M$.

To examine the restricted fibration $f|_{f^{-1}(D_{+})}$, let $(c_1, \ldots, c_k)$ be the collection of its vanishing cycles, where each $c_j$ corresponds to $b_j$.
Each $\tau_{c_j}$ descends to the element $\overline{\tau}_{c_j}=\phi(\tau_{c_{j}})$ of $\M(S^2, 2g+2)$ via 
\begin{align*}
\phi \colon \H(S^2, g) \rightarrow \M(S^2, 2g+2), 
\end{align*}
and one can take its distinguished lift $\hat{\tau}_{c_j} \in B(S^2, 2g+2)$ as in Section \ref{section: braids}.
Note that the product of the $\hat{\tau}_{c_j}$'s is either $1$ or $(\hat{\sigma}_1 \cdots \hat{\sigma}_{2g-1})^{2g+2}$ because $\tau_{c_1} \cdots \tau_{c_k}=1$.
Let $a_1, \ldots, a_{2g+2}$ be mutually distinct points in $\C$. 
Regarding $B(S^2, 2g+2)$ as $\pi_1(\mathrm{Conf}(\CP^1, 2g+2), [a_1, \ldots, a_{2g+2}])$ and setting 
\begin{align*}
\Delta \coloneqq \{(z_1, \ldots, z_{2g+2}) \in (\CP^1)^{2g+2} \mid z_{i} \neq z_{j} \mathrm{\ for \ some \ } i \neq j\}, 
\end{align*}
take a smooth path 
\[
\ell_j \colon [0,1] \rightarrow \C^{2g+2} \setminus \Delta \subset (\CP^1)^{2g+2} \setminus \Delta, \quad \ell_j(t)=(a_1^{(j)}(t), \ldots, a_{2g+2}^{(j)}(t))
\] 
in such a way that it starts from $(a_1, \ldots, a_{2g+2})$ and ends at $(a_{\varphi_j(1)}, \ldots, a_{\varphi_j(2g+2)})$ for some element $\varphi_j$ of the symmetric group of order $2g+2$, and it also descends to a loop in $\mathrm{Conf}(\CP^1, 2g+2)$ representing the braid $\hat{\tau}_{c_j}$.

Now remove a small neighborhood of $\Critv(f)$ from $D_{+}$ and identify the resulting holed disk with 
\[
	D_{*}=(D_{1/2}(0) \cup (\cup_{j=1}^{k} D_{3\varepsilon}(b_j) ) )\setminus \cup_{j=1}^{k} D_{2\varepsilon}(b_j) \subset \C, 
\]
where $D_{r}(a)$ denotes the closed disk in $\C$ of radius $r$ centered at $a$. 
Set $a_{i}^{(j)}(t)=a_{i}^{(j)}(0)$ for $t<0$ and $a_i^{(j)}(t)=a_i^{(j)}(1)$ for $t>1$. 
Define the subspace $Y^{(0)}_{+,*}$ of $D_{*} \times \CP^2$ fibered over $D_{*}$ by 
\begin{align*}
	Y^{(0)}_{+,*}   \coloneqq  & \left\{(s, (x:y:z)) \in (D_{1/2}(0)\setminus  \cup_{j=1}^{k} D_{2\varepsilon}(b_j)) \times \CP^2 \ \middle| \
	\prod_{i=1}^{2g+2}(x-a_iz)=y^2z^{2g} \right\}  \\
	& \cup \left(   \bigcup_{j=1}^{k} \bigcup_{t \in [-1/2, 3/2]} \bigcup_{\rho \in [2\varepsilon, 3\varepsilon]} \biggr\{(s, (x: y: z)) \in (D_{3\varepsilon}(b_j)\setminus D_{2\varepsilon}(b_j)) \times \CP^2 \  \middle| \right. \\
	& \hspace{40pt} \left. \left.  s=b_j+\rho e^{\pi (t-1/2)i+i\arg (b_j)}, \prod_{i=1}^{2g+2}(x-a_i^{(j)}(t)z)=y^2z^{2g} \right\} \right).
\end{align*} 
We would like to fill the \textit{holes} by using the local models constructed in Section \ref{section: local model}. 
Let $\ell_{\mathrm{irr}}$ (resp. $\ell_{h,g-h}$) be the path in $(\CP^1)^{2g+2} \setminus \Delta$ defined as the motion of intersections of the family $\{\{\varepsilon e^{2\pi i t}\} \times \CP^1\}_{t \in [0,1]}$ and the complex curve $B_{\mathrm{irr}}$ (resp. $B_{h,g-h}$), which descends to a loop in the configuration space representing the braid monodromy of $B_{\mathrm{irr}}$ (resp. $B_{h,g-h}$). 
Suppose that $c_j$ is a non-separating curve. 
Then, there exists a smooth homotopy $L_j$ between the two paths $\ell_j$ and $\ell_{\mathrm{irr}}$: 
\[
	L_j \colon [0,1] \times [0,1] \rightarrow \C^{2g+2} \setminus \Delta, \quad L_j(t,0)= \ell_j(t), \quad L_j(t,1)= \ell_{\mathrm{irr}}(t).
\]
Applying this homotopy, we glue together $Y_{+, *}^{(0)}$ and the local model $Y^{(0)}_{\mathrm{irr}}$ as follows: 
glue $Y^{(0)}_{\mathrm{irr}}$ and $Y^{(0)}_{+,*} \cap ((D_{3\varepsilon}(b_j) \setminus D_{2\varepsilon}(b_j)) \times \CP^2)$ in $D_{3\varepsilon}(b_j) \times \CP^2$ by interpolating a space in $(D_{2\varepsilon}(b_j) \setminus D_{\varepsilon}(b_j)) \times \CP^2$ defined by $L_j$. 
For the case where $c_j$ bounds a genus-$h$ surface, employ $Y^{(0)}_{h,g-h}$ instead of $Y_{\mathrm{irr}}^{(0)}$: glue $Y^{(0)}_{h,g-h}$ and $Y^{(0)}_{+,*} \cap ((D_{3\varepsilon}(b_j) \setminus D_{2\varepsilon}(b_j)) \times \CP^2)$ in $D_{3\varepsilon}(b_j) \times \CP^2$, filling it by a space associated to a smooth homotopy $L_j$ between $\ell_j$ and $\ell_{h,g-h}$. 
Furthermore, an isotopy of braids helps the resulting glued space in $(D_{*} \cup (\cup_{j=1}^{k} D_{2\varepsilon}(b_j))) \times \CP^2$ to extend to a subspace of $D_{1}(0) \times \CP^2$, say $Y^{(0)}_{+}$, so as to satisfy the following: 
%\red{EXPLAIN HOW TO GET THE ISOTPY BELOW; THIS IS THE STANDARD WAY!!!}
the projection $D_{1}(0) \times \CP^2 \rightarrow D_{1}(0)$ is a fibration over $D_1(0)$ away from $\{b_1, \ldots, b_k\}$ when restricted to $Y^{(0)}_{+}$; over a collar neighborhood $\nu(\del D_{1}(0))$ of $\del D_{1}(0)$, the space $Y^{(0)}_{+}$ agrees with 
\[
	\{(s, (x:y:z)) \in \nu(\del D_{1}(0)) \times \CP^2 \mid x^{2g+2}-z^{2g+2}=y^2z^{2g}\},
\]
if the product $[\ell_1] \cdots [\ell_k] \in B(S^2, 2g+2)$ of the descendent braids is trivial, and otherwise agrees with 
\[
	\{(s, (x:y:z)) \in \nu(\del D_{1}(0)) \times \CP^2 \mid x^{2g+2}-(sz/|s|)^{2g+2}=y^2z^{2g}\}.
\]
Note that the product $[\ell_1] \cdots [\ell_k]$ in $B(S^2, 2g+2)$ is $1$ or $(\hat{\sigma}_1 \cdots \hat{\sigma}_{2g+1})^{2g+2}$ since it lies in the kernel of $\psi \colon B(S^2, 2g+2) \rightarrow \M(S^2,2g+2)$. 

Define $Y^{(0)}_{-}$ to be  
\[
	Y^{(0)}_{-} = \{(s, (x:y:z)) \in D_{1}(0) \times \CP^2 \mid x^{2g+2}-z^{2g+2}=y^2z^{2g}\},   
\]
which corresponds to the total space of the trivial bundle $f|_{D_{-}}$. 
We glue $Y^{(0)}_{+}$ and $Y^{(0)}_{-}$ together. 
If $[\ell_1] \cdots [\ell_k]$ is trivial, we identify $(\del D_1(0) \times \CP^2, \del  Y^{(0)}_{+})$ with $(\del D_1(0) \times \CP^2, \del  Y^{(0)}_{-})$ canonically and put 
\begin{align}\label{def: Y^{(0)}}
	Y^{(0)} \coloneqq Y^{(0)}_{+} \cup Y^{(0)}_{-} \subset S^2 \times \CP^2. 
\end{align}
If $[\ell_1] \cdots [\ell_k]$ is not trivial, we define a map 
\begin{align*}
\Phi_{\mathrm{glue}} \colon (\del D_1(0) \times \CP^2, \del  Y^{(0)}_{-}) \rightarrow (\del D_1(0) \times \CP^2, \del  Y^{(0)}_{+}) 
\end{align*}
by 
$
	\Phi_{\mathrm{glue}}(e^{2\pi i \theta}, (x:y:z))=(e^{-2\pi i \theta}, (e^{-2\pi i \theta}x : e^{-(2g+2)\pi i \theta}y: z))
$
and set 
\[	
	Y^{(0)} \coloneqq Y^{(0)}_{+} \cup_{\Phi_{\mathrm{glue}}} Y^{(0)}_{-} \subset S^2 \tilde{\times}_g \CP^2.
\] 
Here, $S^2 \tilde{\times}_g \CP^2 \coloneqq (D_{1}(0) \times \CP^2) \cup_{\Phi_{\mathrm{glue}}} (D_{1}(0) \times \CP^2)$, and this $\CP^2$-bundle can be non-trivial.  
Indeed, the bundle $S^2 \tilde{\times}_g \CP^2$ is trivial if and only if the loop in $\mathrm{PSU}(3)$ associated to $\Phi_{\mathrm{glue}}$ lifts to a loop in the universal cover $\mathrm{SU}(3)$ of $\mathrm{PSU}(3)$; the latter is equivalent to $g \equiv 1$ modulo $3$. 
We set $X^{(0)}=S^2 \times \CP^2$ if $[\ell_1] \cdots [\ell_k]=1$, and otherwise set $X^{(0)}=S^2 \tilde{\times}_g \CP^2$. 

To complete the construction, we resolve the singular locus of $Y^{(0)}$, which consists of the points corresponding to $(0:1:0) \in \CP^2$ in the fibers and the singular points derived from the reducible singular fibers of $f$. 
Both of them can be desingularized by successive blow-ups of $X^{(0)}$ as in Section \ref{section: resolution}, and write $X$ and $Y$ for the smooth manifolds obtained from this $\CP^2$-bundle over $S^2$ and $Y^{(0)}$, respectively, in this way. 
It follows from the discussion in Section \ref{section: diffeo type} that 
if $f$ has $n_0$ reducible singular fibers, $Y$ is diffeomorphic to $M \# n_0 \overline{\CP}^2$, obtained by blowing up $M$ at the singular points of the reducible singular fibers. In particular, $Y$ is diffeomorphic to $M$ if $f$ has only irreducible singular fibers. 
Moreover, we see that for the Lefschetz fibration $f \colon M \rightarrow S^2$, 
composing the either map $S^2 \times \CP^1 \dashrightarrow S^2 \times \CP^1$ or $S^2 \tilde{\times}_g \CP^2 \dashrightarrow S^2 \tilde{\times} \CP^1$ with the projection of the $\CP^1$-bundle over $S^2$ yields a singular fibration $p \colon Y \rightarrow S^2$ whose critical values coincide with those of $f$ and singular fiber over $b_j$ is homeomorphic to the singular fiber $f^{-1}(b_j)$ (resp. the total transform of $f^{-1}(b_j)$ under the blow-up at the singular point) if $f^{-1}(b_j)$ is irreducible (resp. reducible).

For future use, we summarize the preceding discussion as follows. 

\begin{theorem}\label{thm: embedding}
Let $f \colon M \rightarrow S^2$ be a hyperelliptic Lefschetz fibration of genus $g \geq 2$ or a non-trivial Lefschetz fibration of genus $1$. Suppose that $f$ has $n_0$ reducible singular fibers. 
Then, there exists a closed $6$-manifold $X$ and its submanifold $Y$ such that: 
\begin{itemize}
\item the manifold $X$ is obtained from the trivial $\CP^2$-bundle $S^2 \times \CP^2$ over $S^2$ (resp. the bundle $S^2 \tilde{\times}_g \CP^2$) by repeated blow-ups if the global braid monodromy associated to $f$ is trivial (resp. non-trivial);
\item the submanifold $Y$ is diffeomorphic to the manifold obtained from $X$ by blowing up at the singular points of the $n_0$ reducible singular fibers; 
\item it also admits a singular fibration $p \colon Y \rightarrow S^2$ whose critical value set agrees with $\Critv(f)$ and singular fiber over $b_j$ is homeomorphic to the singular fiber $f^{-1}(b_j)$ (resp. the total transform of $f^{-1}(b_j)$ under the blow-up) if $f^{-1}(b_j)$ is irreducible (resp. reducible).
\end{itemize}
\end{theorem}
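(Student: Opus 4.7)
The plan is to execute the gluing construction outlined in Section \ref{section: gluing} and record its output as the claimed pair $(X,Y)$. I would proceed in four stages. First, I would translate the monodromy data of $f$ into spherical braid data: each vanishing-cycle Dehn twist $\tau_{c_j}$ descends under $\phi\colon \H(\Sigma_g)\to\M(S^2,2g+2)$ and admits a canonical lift $\hat{\tau}_{c_j}\in B(S^2,2g+2)$ by the distinguished-lift discussion of Section \ref{section: braids}; pick smooth paths $\ell_j$ in $\C^{2g+2}\setminus\Delta$ whose projections to the configuration space realize these braids.

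Second, over a disk $D_{+}\subset S^2$ containing all critical values, I would build a singular subspace $Y_{+}^{(0)}\subset D_{+}\times\CP^2$ fiberwise. Away from the $b_j$'s, the fiber over $s$ is the double cover of $\CP^2$ cut out by $\prod_{i=1}^{2g+2}(x-a_i^{(j)}(t)z)=y^2z^{2g}$, whose $2g+2$ branch points track the path $\ell_j$. In a neighborhood of each $b_j$, I would insert the local model $Y_{\mathrm{irr}}^{(0)}$ or $Y_{h,g-h}^{(0)}$ of Section \ref{section: local model}, according as $c_j$ is non-separating or bounds a genus-$h$ subsurface. The bridge across the annular collar is a smooth homotopy in $\C^{2g+2}\setminus\Delta$ between $\ell_j$ and the local braid monodromy $\ell_{\mathrm{irr}}$ or $\ell_{h,g-h}$; such a homotopy exists because both paths represent $\hat{\tau}_{c_j}$ in $B(S^2,2g+2)$.

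Third, I would close up over the complementary disk $D_{-}$. On $\del D_{+}$ the space $Y_{+}^{(0)}$ is a double cover branched at $2g+2$ points whose spherical braid monodromy is $[\ell_1]\cdots[\ell_k]$; since $\tau_{c_1}\cdots\tau_{c_k}=1$, this product lies in $\ker\psi$ and hence equals either $1$ or the full twist $(\hat{\sigma}_1\cdots\hat{\sigma}_{2g+1})^{2g+2}$. In the trivial case I would glue the standard model $Y_{-}^{(0)}$ in $D_{-}\times\CP^2$ directly to get $X^{(0)}=S^2\times\CP^2$; in the non-trivial case I would glue via the twist $\Phi_{\mathrm{glue}}$, which simultaneously identifies the ambient $\CP^2$-bundle as $X^{(0)}=S^2\tilde\times_g\CP^2$.

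Fourth, I would resolve. The singularities of $Y^{(0)}$ are confined to the fiberwise $(0:1:0)$-locus together with one isolated double point per reducible fiber; applying the sequence of point-blow-ups and $\CP^1$-blow-ups from Section \ref{section: resolution} to $X^{(0)}$ desingularizes them, each reducible fiber contributing one exceptional $\overline{\CP}^2$ summand. The double-branched-cover description of Section \ref{section: diffeo type}, applied now globally via the induced projection $Y\to S^2$ through the $\CP^1$-bundle, identifies $p$ as the blow-up of $f$ at each reducible singular point, whence $Y\cong M\#n_0\overline{\CP}^2$ and the fiber-matching claim. The main technical point I expect is checking that the interpolating homotopies $L_j$ and the closing identification on $\del D_{+}$ can be chosen compatibly so that the glued $Y^{(0)}$ is an embedded subspace; this reduces to path-connectivity in $\C^{2g+2}\setminus\Delta$ and to the uniqueness of the distinguished lift in $B(S^2,2g+2)$, both furnished by Section \ref{section: braids}.
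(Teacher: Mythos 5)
Your proposal follows essentially the same route as the paper: the theorem is stated there as a summary of the construction of Section \ref{section: gluing}, and your four stages (distinguished braid lifts and paths $\ell_j$, fiberwise assembly of $Y_{+}^{(0)}$ with the local models $Y_{\mathrm{irr}}^{(0)}$, $Y_{h,g-h}^{(0)}$ bridged by homotopies $L_j$, closing over $D_-$ trivially or via $\Phi_{\mathrm{glue}}$ according to whether $[\ell_1]\cdots[\ell_k]$ is $1$ or the full twist, then resolving by the blow-ups of Section \ref{section: resolution}) reproduce it faithfully, including the identification $Y\cong M\# n_0\overline{\CP}^2$ and the fiber-matching statement. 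The only slips are cosmetic (the fibers are branched double covers of $\CP^1$ cut out in $\CP^2$, not covers of $\CP^2$, and the reducible-fiber singularity is an isolated singular point needing the full sequence of blow-ups rather than a single double point), and they do not affect the argument.
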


\subsection{Symplectic forms on $X$ making $Y$ symplectic}\label{section: symplectic}

Now for the manifolds $X$ and $Y$ given in Theorem \ref{thm: embedding}, we endow $X$ with a symplectic structure so that $Y$ becomes its symplectic submanifold. 

\begin{proposition}\label{prop: symplectic}
Let $X$ and $Y$ be as above. Then, there exists a symplectic form $\omega$ on $X$ such that $Y$ is a symplectic submanifold of $(X,\omega)$. 
\end{proposition}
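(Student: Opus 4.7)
The plan is a two-step construction. First, equip the ambient $\CP^2$-bundle $X^{(0)}$ with a symplectic form $\omega^{(0)}$ for which $Y^{(0)}$ is symplectic on its smooth locus; second, perform the iterated Kähler blow-up procedure of Section~\ref{section: Kaehler form} along the centres listed in Section~\ref{section: resolution}, to lift $\omega^{(0)}$ to a symplectic form $\omega$ on $X$ whose restriction to the proper transform $Y$ is symplectic.

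For the first step, set $\omega^{(0)}_\lambda := \lambda\,\omega_b + \omega_f$ on $X^{(0)}$, where $\omega_b$ is an area form on $S^2$ and $\omega_f$ is a fibrewise Fubini--Study form. The latter exists globally because the gluing map $\Phi_{\mathrm{glue}}$ of Section~\ref{section: gluing} is diagonal in $\mathrm{PSU}(3)$, which preserves $\omega_{\mathrm{FS}}$. For every $\lambda>0$, Thurston's standard argument makes $\omega^{(0)}_\lambda$ symplectic on $X^{(0)}$. The essential observation for $Y^{(0)}$ is that in each fibre $\{s\}\times\CP^2$ the slice $Y^{(0)}\cap(\{s\}\times\CP^2)$ is (the smooth locus of) a complex algebraic curve, because the homotopies $L_j$ and the map $\Phi_{\mathrm{glue}}$ deform only the coefficients of the defining polynomials smoothly in $s$. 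Hence $\omega_f$ restricts to a symplectic form on every smooth fibre-slice of $Y^{(0)}$, and a Thurston-type argument then gives that, for $\lambda$ sufficiently large, $\omega^{(0)}_\lambda|_{Y^{(0)}}$ is non-degenerate wherever the projection $Y^{(0)}\to S^2$ is a submersion. At the Lefschetz-type critical points of this projection, modeled on the algebraic local models of Section~\ref{section: local model}, the construction is genuinely holomorphic, so $\omega^{(0)}_\lambda$ is Kähler there and $Y^{(0)}$ is a local complex submanifold, automatically giving non-degeneracy.

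For the second step, every blow-up centre in Section~\ref{section: resolution} (the section $S^2\times\{(0:1:0)\}$ and the further points and $\CP^1$'s arising from the reducible singular fibres) lies in a region where $\omega^{(0)}_\lambda$ is Kähler and $Y^{(0)}$ is a holomorphic subvariety. Apply the procedure of Section~\ref{section: Kaehler form} inductively: at the $k$-th blow-up form $\pi_k^*\omega_{k-1} + \varepsilon_k\,\omega_{L_k}$ and choose $\varepsilon_k>0$ small enough, using compactness, to guarantee global non-degeneracy. Since the proper transform of a complex submanifold under a Kähler blow-up along a holomorphic centre is again a complex, hence symplectic, submanifold, each proper transform of $Y^{(0)}$ remains symplectic through the induction, and after finitely many stages we obtain the desired $\omega$ on $X$ making $Y$ symplectic. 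The main obstacle is simply the bookkeeping of the parameters $\lambda,\varepsilon_1,\ldots,\varepsilon_N$: this is routine because the $\varepsilon_k$ may be shrunk independently at each stage without disturbing non-degeneracy already established, and because the blow-up loci are disjoint from the non-holomorphic ``interpolation'' regions of Section~\ref{section: gluing}, so the Thurston large-$\lambda$ argument and the Kähler blow-up argument are applied in essentially disjoint parts of $X^{(0)}$.
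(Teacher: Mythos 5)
Your overall strategy (Thurston's trick with a large multiple of the base area form plus a fibrewise K\"ahler form, combined with the K\"ahler blow-up procedure of Section~\ref{section: Kaehler form}) is the same as the paper's, but there is one step in your second stage that does not go through as stated. You justify lifting the symplectic form through \emph{all} the blow-ups by claiming that every blow-up centre lies in a region where $\omega^{(0)}_\lambda$ is K\"ahler, $Y^{(0)}$ is a holomorphic subvariety, and in particular that ``the blow-up loci are disjoint from the non-holomorphic interpolation regions.'' That is false for the centre $S^2\times\{(0:1:0)\}$ (and its $g$ successors): this section meets every fibre, hence in particular passes over the annuli where the homotopies $L_j$ and the braid paths $\ell_j$ are used. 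Over those annuli the defining equation of $Y^{(0)}$ has coefficients $a_i^{(j)}(t)$ depending only smoothly on the base variable, so near that centre $Y^{(0)}$ is fibrewise complex but is \emph{not} a holomorphic subvariety of the total space, and the argument ``proper transform of a complex submanifold under a K\"ahler blow-up along a holomorphic centre is again complex, hence symplectic'' is not available there. (The centres $C_0$ coming from the reducible fibres are genuinely confined to the holomorphic local models, so for those your argument is fine.)

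The repair is essentially to reorder the two stages as the paper does. The blow-up along $S^2\times\{(0:1:0)\}$ is a \emph{fibrewise} operation, $Bl(S^2\times\CP^2)\cong S^2\times(\CP^2\#(g+1)\overline{\CP}^2)$, so one first performs these $g+1$ blow-ups, takes a K\"ahler form $\tilde\omega_{\mathrm{FS}}$ on the blown-up fibre $\CP^2\#(g+1)\overline{\CP}^2$, observes that the proper transform $\tilde Y^{(0)}$ is still a smoothly varying family of complex curves in the fibres, and only then runs the Thurston argument with $K\omega_0+\tilde\omega_{\mathrm{FS}}$ to get non-degeneracy on $\tilde Y^{(0)}\setminus C_0$. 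After that, the ambient K\"ahler blow-up of Section~\ref{section: Kaehler form} is applied only along $C_0$, where everything is honestly holomorphic, exactly as in your second step. With that reordering your proof coincides with the paper's.
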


\begin{proof}
With the same notation as in Section \ref{section: gluing}, we only deal with the case $X^{(0)}=S^2 \times \CP^2$; 
the other cases proceed similarly. 

Consider the space $Y^{(0)} \subset X^{(0)}$ defined in (\ref{def: Y^{(0)}}). 
We have observed that its singular locus consists of $S^2 \times \{(0:1:0)\}$ and the set $C_0$ of singular points coming from the reducible singular fibers of $f$. 
Blowing up the former singular points $g+1$ times, we obtain the subspace $\tilde{Y}^{(0)}$ in $\tilde{X}^{(0)}=S^2 \times (\CP^2 \# (g+1)\overline{\CP}^2)$ as the proper transform of $Y^{(0)}$. 
Note that $\tilde{Y}^{(0)}$ is smooth away from $C_0$. 
In view of the discussion in Section \ref{section: Kaehler form}, one can take a K\"{a}hler form $\tilde{\omega}_{\mathrm{FS}}$ on $\CP^2 \# (g+1) \overline{\CP}^2$ originating from the Fubini--Study form $\omega_{\mathrm{FS}}$ on $\CP^2$. 
The projection $\tilde{X}^{(0)} \rightarrow S^2$ leads to a singular fibration on $Y^{(0)}$ each fiber of which is a complex curve in $\CP^2 \# (g+1) \overline{\CP}^2$; 
hence it is symplectic with respect to $\tilde{\omega}_{\mathrm{FS}}$. 
Thus, a standard argument (see \cite[pp. 401--403]{GS} for example) shows that for a large constant $K>0$, the $2$-form $K\omega_{0}+\tilde{\omega}_{\mathrm{FS}}$ restricts to a symplectic form on $Y \setminus C_0$. 
Of course, this $2$-form is symplectic on $\tilde{X}^{(0)}$. 

The remaining thing is $C_0$. 
By construction, the space $\tilde{Y}^{(0)}$ is defined to be the zero set of a complex polynomial on a neighborhood of each point of $C_0$. 
Thus, after blowing up $\tilde{X}^{(0)}$ along $C_0$ to get $Y$, it remains a complex submanifold around a neighborhood of the exceptional divisors. 
Now, in light of the discussion in Section \ref{section: Kaehler form} again, we show that $X$ admits a symplectic form $\omega$ obtained from $K\omega_0+\tilde{\omega}_{\mathrm{FS}}$ which is K\"ahler near the exceptional divisors and for which $Y$ is a symplectic submanifold. 
This completes the proof.
\end{proof}

\subsection{Homology class of the submanifold $Y$}\label{section: homology}

Here we examine the homology class of the submanifold $Y$. 
To state the following proposition, we define a distinguished section of the $\CP^2$-bundle $X^{(0)} \rightarrow S^2$. 
Recall that this bundle is given by $X^{(0)}=(D_1(0) \times \CP^2) \cup (D_1(0) \times \CP^2)$ or $X^{(0)}=(D_1(0) \times \CP^2) \cup_{\Phi_{\mathrm{glue}}} (D_1(0) \times \CP^2)$. 
The two copies of $D_1(0) \times \{(1:0:0) \in \CP^2\}$ provide a section $\sigma_0$ of the bundle $X^{(0)} \rightarrow S^2$ in the both cases. 

\begin{proposition}\label{prop: homology}
Given a hyperelliptic Lefschetz fibration $f \colon M \rightarrow S^2$ of genus $g \geq 2$ or genus $1$ with at least one singular fiber, and let $X^{(0)}$, $Y^{(0)}$, $X$ and $Y$ be the spaces defined as in Section \ref{section: gluing}. 
Suppose that $f$ has $n_0$ reducible singular fibers, and the algebraic intersection number of $Y^{(0)}$ and the section $\sigma_0$ of the $\CP^2$-bundle $X^{(0)} \rightarrow S^2$ is $m$ with respect to their orientations induced by the complex structures. 
Then, the homology class of $Y$ is determined by the numbers $g$, $n_0$ and $m$. 
\end{proposition}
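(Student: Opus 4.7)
The plan is to use Poincar\'e duality. Since $X^{(0)}$ is a $\CP^2$-bundle over $S^2$, it is simply connected with torsion-free integral homology, and each blow-up appearing in the construction of $X$ from $X^{(0)}$ (either at a point or along a smooth embedded $\CP^1$) preserves both properties; hence $X$ is simply connected and $H_*(X;\Z)$ is torsion-free. Consequently $[Y]\in H_4(X;\Z)$ is determined by the intersection numbers $[Y]\cdot\gamma$ as $\gamma$ ranges over any basis of $H_2(X;\Z)$.

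I would choose the basis of $H_2(X;\Z)$ that the construction provides: the class $[\sigma_0]$ of the distinguished section, the class $[\ell]$ of a complex line in a regular $\CP^2$-fiber of $X^{(0)}\to S^2$ chosen away from all blow-up loci, and, for each blow-up performed in Sections \ref{section: local model}--\ref{section: gluing}, the class of the $\CP^1$-fiber of the resulting exceptional divisor. The intersection $[Y]\cdot[\sigma_0]$ equals $m$ by hypothesis. For $[Y]\cdot[\ell]$, I would observe that a regular fiber of $Y^{(0)}$ is the plane curve $\prod_{i=1}^{2g+2}(x-a_i z)=y^2 z^{2g}$ of degree $2g+2$, so B\'ezout's theorem gives $[Y]\cdot[\ell]=2g+2$. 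Each intersection $[Y]\cdot[E]$ with an exceptional class $[E]$ is local, determined by the multiplicity of the proper transform of $Y^{(0)}$ (at the stage of the construction where $E$ is introduced) at the corresponding blow-up center; this multiplicity is read off directly from the universal local equations of Section \ref{section: local model}. For the $g+1$ fiberwise blow-ups along $S^2\times\{(0:1:0)\}$ the multiplicities depend only on $g$, while for the blow-ups resolving each reducible singular point they are governed by the equations (\ref{eqn: first blow-up})--(\ref{eqn: irr}) and depend only on $g$.

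The main obstacle will be the bookkeeping in the reducible-fiber case: the resolution there involves a long chain of blow-ups ($2h-1$ along $\CP^1$'s together with several point blow-ups) whose proper-transform multiplicities must be tracked through successive stages. I would handle this by an induction based on the recursive structure of (\ref{eqn: irr}), showing at each step that the multiplicity of the current proper transform at the next blow-up center is computable from $g$ alone, independently of where in the chain one stands. Assembling the resulting local intersection numbers with $[Y]\cdot[\sigma_0]=m$ and $[Y]\cdot[\ell]=2g+2$ and invoking Poincar\'e duality pins down $[Y]$ as a class determined solely by $g$, $n_0$, and $m$.
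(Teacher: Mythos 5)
Your argument is correct and rests on the same two computations as the paper's: the pairing of $[Y]$ with a line in a fiber gives $2g+2$ by the degree of the fiber curve, and the pairing with the section class gives $m$ by hypothesis. The difference is purely organizational, but it matters for the length of the argument. You work upstairs in $X$, taking a basis of $H_2(X;\Z)$ that includes one exceptional class per blow-up, and so you are forced into exactly the multiplicity bookkeeping you flag as the ``main obstacle'': tracking the order of vanishing of each successive proper transform along each blow-up center through the chains of equations (\ref{eqn: first blow-up})--(\ref{eqn: irr}). The paper instead computes downstairs: it writes $[Y^{(0)}]=pA+qB$ in $H_4(X^{(0)};\Z)\cong\Z^2$, where $A$ is the fiber class and $B$ the class of the $\CP^1$-subbundle, pairs against the dual basis $\{\alpha,\beta\}$ of $H_2(X^{(0)};\Z)$ to get $[Y^{(0)}]=mA+(2g+2)B$, and then disposes of all the exceptional contributions at once with the remark that the resolution $Y^{(0)}\rightsquigarrow Y$ is performed by a fixed sequence of blow-ups with universal local models, so that $[Y^{(0)}]$ together with $g$ and $n_0$ determines $[Y]$ ``by construction.'' That single observation replaces your induction on the recursive structure of (\ref{eqn: irr}). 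Your route buys an explicit formula for $[Y]$ in a full basis of $H_4(X;\Z)$ if you carry it out, which the paper never writes down; the paper's route buys brevity. One point you should make explicit if you pursue your version: the section $\sigma_0$ and a generic fiber line are disjoint from all blow-up loci, so their intersection numbers with $Y$ really do equal those with $Y^{(0)}$ computed in $X^{(0)}$ --- this is implicit in your step 3 but is what licenses transporting the hypothesis on $m$ from $X^{(0)}$ up to $X$.
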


\begin{proof}
We will compute the homology class $Y^{(0)}$ in $X^{(0)}$ rather than that of $Y$ in $X$ because the former determines the latter only depending on $g$ and $n_0$ by construction. 
We see that $X^{(0)}$ contains a $\CP^1$-bundle over $S^2$ as a subbundle. 
Indeed, similarly to the definition of $\sigma_0$, if $X^{(0)}=(D_1(0) \times \CP^2) \cup (D_1(0) \times \CP^2)$, the two families of $\{(x:0:z) \in \CP^2\}$ over $D_1(0)$ give a trivial $\CP^1$-bundle over $S^2$ in $X^{(0)}$; 
if $X^{(0)}=(D_1(0) \times \CP^2) \cup_{\Phi_{\mathrm{glue}}} (D_1(0) \times \CP^2)$, the same families glue together via $\Phi_{\mathrm{glue}}$ to form a non-trivial $\CP^1$-bundle over $S^2$. 
Let $A$ and $B$ denote the fourth homology classes of a fiber of $X^{(0)} \rightarrow S^2$ and this $\CP^1$-bundle over $S^2$, respectively. 
Also let $\alpha$ and $\beta$ denote the second homology classes of a line in a fiber of $X^{(0)}$ and the section $\sigma_0$, respectively. 
We easily see that $A$ and $B$ (resp. $\alpha$ and $\beta$) generate $H_4(X^{(0)}; \Z)$ (resp. $H_{2}(X^{(0)}; \Z)$) and 
\begin{align}\label{eqn: intersection}
	A \cdot \alpha = 0, \quad A \cdot \beta = 1, \quad B \cdot \alpha = 1, \quad  B \cdot \beta =0, 
\end{align}
where $\cdot$ denotes the intersection product. 

Suppose that $[Y^{(0)}]$ is written as $[Y^{(0)}] = pA+qB$ for some $p, q \in \Z$. 
In light of (\ref{eqn: intersection}), we have 
\begin{align*}
	[Y^{(0)}] \cdot \alpha  =  (pA+qB) \cdot \alpha = q, \quad [Y^{(0)}] \cdot \beta  =  (pA+qB) \cdot \beta = p. 
\end{align*}
Since the space $Y^{(0)}$ is the zeros of a homogeneous polynomial of degree $2g+2$ over a generic point of $S^2$, we conclude $[Y^{(0)}] \cdot \alpha=2g+2$. 
Moreover, by assumption, $[Y^{(0)}] \cdot \beta=m$. 
Therefore,  we have $[Y^{(0)}]=mA+(2g+2)B$, which finishes the proof. 
\end{proof}

\section{Symplectic submanifolds in a fixed homology class}\label{section: main thm}

\subsection{Twisting submanifolds}\label{section: twisting}

Let $f_1 \colon M_1 \rightarrow S^2$ and $f_2 \colon M_2 \rightarrow S^2$ be hyperelliptic Lefschetz fibrations of genus $g$. 
Take a closed small disk $D$ in $S^2$ containing no critical values of $f_1$ and $f_2$. 
Then, both fibrations are trivialized over $D$, and fix trivializations of them along $\del D$. 
Now identify those fibrations along $\del D$ with the product bundle $\Sigma_g \times S^1 \rightarrow S^1$, where $\Sigma_g$ is a closed, connected, oriented, smooth surface of genus $g$ and $S^1$ is considered as the set $\R/2\pi\Z$. 
Take an element $\varphi$ of the hyperelliptic mapping class group $\H(\Sigma_g)$. 
This induces the fiber-preserving diffeomorphism $F(\varphi) \colon S^1 \times \Sigma_g\rightarrow S^1 \times \Sigma_g$, $(F(\varphi))(\theta, x)=(-\theta, \varphi(x))$, which glues $f_1^{-1}(S^2 \setminus \mathrm{Int}D)$ and $f_2^{-1}(S^2 \setminus \mathrm{Int}D)$ together. 
As a result, we obtain a hyperelliptic Lefschetz fibration $f_1 \#_{\varphi} f_2$ on the glued manifold $M_1 \#_{\varphi} M_2$, called the \textit{fiber sum} of $f_1$ and $f_2$. 
The goal of this section is to interpret the operation of fiber sum in the relative setting.

\begin{lemma}\label{lem: Luttinger}
Let $f_i \colon M_i \rightarrow S^2$ ($i=1,2$) be a hyperelliptic Lefschetz fibration of genus $g \geq 2$ or genus $1$ with at least one singular fiber and $c$ a non-separating simple closed curve on $\Sigma_g$ which is preserved by a hyperelliptic involution. 
Let $(X, \omega)$ and $Y$ denote the symplectic $6$-manifold and its $4$-dimensional symplectic submanifold associated to $f_1 \#_{\mathrm{id}} f_2 \colon M_1 \#_{\mathrm{id}} M_2 \rightarrow S^2$, constructed as in Theorem \ref{thm: embedding} and Proposition \ref{prop: symplectic}. 
Suppose that the global braid monodromy of $f_2$ is trivial.
Then, for any integer $n\in \Z$, there exists a $4$-dimensional symplectic submanifold of $(X,\omega)$ which is diffeomorphic to $M_{1} \#_{\tau_{c}^{n}} M_2$, up to blow-up, and which is homologous to $Y$.
\end{lemma}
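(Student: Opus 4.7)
The plan is to execute the ``relative braiding construction'' alluded to in the introduction, modifying the branched-cover description of $Y$ by a localized braid twist. Recall that by Theorem \ref{thm: embedding} and Proposition \ref{prop: symplectic}, after the blow-ups of Section \ref{section: resolution} one may view $Y$ as a double cover of a $\CP^1$-subbundle $W \subset X$ branched along a symplectic surface $B \subset W$, whose braid monodromy around the critical values of $f_1 \#_{\mathrm{id}} f_2$ recovers the distinguished lifts $\hat{\tau}_{c_j} \in B(S^2, 2g+2)$ of the vanishing cycles. The strategy is to alter $B$ over a small annular region of $S^2$ in such a way that the Hurwitz-equivalence class of the new braid factorization corresponds to the fiber sum $f_1 \#_{\tau_c^n} f_2$, while leaving the global braid monodromy (hence the ambient data $(X,\omega)$) unchanged.

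First I would choose an annulus $A \cong S^1 \times [0,1] \subset S^2$ with both boundary circles lying on the $f_2$-side of the gluing circle $\partial D$, positioned so that each component of $\partial A$ separates some subset of $\mathrm{Critv}(f_2)$ from the rest. Since the global braid monodromy of $f_2$ is trivial in $B(S^2,2g+2)$, the braid monodromy along $\partial A$ is trivial, so $B|_{W|_{A}}$ may be assumed to consist of $2g+2$ disjoint constant sections. I would then modify $B$ over $A$ by inserting a surface whose braid monodromy along the core circle of $A$ is $\hat{\tau}_c^n$. Concretely, because $c$ is non-separating and preserved by $\iota$, the image $\overline{\tau}_c$ is conjugate to $\overline{\sigma}_1$ in $\M(S^2, 2g+2)$, so $\hat{\tau}_c^n$ is a braid in which two adjacent branch points execute $n$ half-twists around each other while the remaining $2g$ points stay fixed; this can be realized by an explicit symplectic braided surface, modeled on the algebraic families used in Section \ref{section: local model}. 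Correspondingly, I would insert $\hat{\tau}_c^{-n}$ in a second annulus inside the $f_2$-region so that the net insertion is trivial in $B(S^2, 2g+2)$. This is exactly where the hypothesis on $f_2$ is used: it guarantees the compensating inverse braid can be realized entirely inside the $f_2$-piece, without disturbing the $f_1$-piece.

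The outcome is a new symplectic branch surface $B_n \subset W$ whose associated double cover, after the same resolutions of Section \ref{section: resolution}, is a symplectic submanifold $Y_n \subset (X,\omega)$: because the modification is supported in a ball away from $\Crit(f_1)$, the argument of Proposition \ref{prop: symplectic} applies verbatim and, after a compactly supported symplectic isotopy, the ambient form can be taken equal to $\omega$. To identify $Y_n$ up to blow-up, observe that the new braid factorization, pushed down via $\phi \colon \H(\Sigma_g)\rightarrow \M(S^2, 2g+2)$, is Hurwitz-equivalent to the mapping class factorization associated to $f_1 \#_{\tau_c^n} f_2$, since the conjugation of the $f_2$-factors by $\tau_c^n$ introduced by the modification is precisely the effect of regluing by $\tau_c^n$ along $\partial D$. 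Kas's theorem \cite{Kas} together with the analysis of Section \ref{section: diffeo type} then identifies $Y_n$, up to blow-up at reducible fibers, with $M_1 \#_{\tau_c^n} M_2$. That $Y_n$ is homologous to $Y$ is immediate from Proposition \ref{prop: homology}, since the invariants $g$, $n_0$, and the intersection number with $\sigma_0$ are all unchanged.

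The main obstacle I expect is producing a genuinely \emph{symplectic} braided surface in $W$ realizing the twist $\hat{\tau}_c^n$ and matching it symplectically to the unmodified $B$ outside $A$. This is the substantive content of the relative ADK-type \cite{ADK} construction, and should require an explicit local symplectic model of the braid half-twist together with a symplectic interpolation lemma across $\partial A$; everything else --- the topological identification via Kas, the Hurwitz manipulation, and the homology computation --- is then a routine consequence of the material developed earlier in the paper.
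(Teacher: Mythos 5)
Your overall strategy (modify the branch locus over an annular region on the $f_2$-side by a braid twist built from $\hat\tau_c^n$, identify the result via Kas, get the homology class from Proposition \ref{prop: homology}) is the same as the paper's, but the core geometric move is misstated in a way that matters. If the branch surface has no vertical tangencies over the annulus $A$, the braid monodromy along the core circle is conjugate to the monodromy along either boundary circle; so you cannot keep $B$ equal to constant sections near $\partial A$ and simultaneously ``insert a surface whose braid monodromy along the core circle is $\hat\tau_c^n$'' without creating new branch points (i.e.\ new singular fibers). What actually works, and what the paper does in \eqref{model': trivial}, is to braid the two relevant strands by $\hat\tau_c^n$ in the \emph{radial} direction across a single annulus whose core encloses \emph{all} of $\Critv(f_2)$: the twisting factor $e^{2n\pi i\chi(t)}$ depends on the coordinate $t$ transverse to $\alpha$, not on $\theta$. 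This leaves every circle parallel to the core with trivial monodromy, changes nothing outside the annulus, and conjugates the monodromies of the enclosed critical values (seen from the outside basepoint) by $\tau_c^n$, which is exactly the fiber-sum regluing. In particular no compensating $\hat\tau_c^{-n}$ is needed, and your compensation is actually harmful: if the second annulus encloses the same critical values the two conjugations cancel and you recover $M_1\#_{\mathrm{id}}M_2$, while if it encloses none it is isotopically trivial. Relatedly, your positioning of $A$ (``each component of $\partial A$ separates some subset of $\Critv(f_2)$ from the rest'') does not give trivial monodromy along $\partial A$: triviality of the global braid monodromy of $f_2$ only controls circles enclosing all of $\Critv(f_2)$, since a proper sub-product of the $\hat\tau_{c_j}$'s need not be trivial. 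The hypothesis on $f_2$ is used precisely to put the branch locus over one such all-enclosing annulus into the constant-section form \eqref{model: trivial} and to guarantee that conjugating the $f_2$-factors does not disturb the total monodromy, hence neither $X$ nor the gluing to the $D_-$ side.

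The second genuine gap is that you defer exactly the step that carries the analytic content of the lemma: showing the twisted submanifold is symplectic \emph{for the same form} $\omega$. The paper does this by writing the twisted piece explicitly as \eqref{model': trivial} and proving Lemma \ref{lem: symplectic}: each fiber over $(t,\theta)$ remains a complex curve, $\partial/\partial\theta$ is tangent because the defining equation is $\theta$-independent, and the $K\,dt\wedge d\theta$ term makes the horizontal $2$-plane spanned by $\partial/\partial t+v$ and $\partial/\partial\theta$ symplectic. Your appeal to ``a compactly supported symplectic isotopy'' to bring the ambient form back to $\omega$ is not available as stated and would in any case have to be justified; the whole point of the lemma is that $(X,\omega)$ is held fixed while only $Y$ changes. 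Without an explicit symplectic local model for the twist region (or an equivalent interpolation argument), the proposal does not yet prove the statement; the Kas/Hurwitz identification of $M_1\#_{\tau_c^n}M_2$ and the homology argument via Proposition \ref{prop: homology} are fine and agree with the paper.
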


To prove this lemma, we introduce a \textit{twisting} construction to modify the submanifold $Y$ symplectically. 
For the trivial fiber sum $f_1 \#_{\mathrm{id}} f_2 \colon M_1 \#_{\mathrm{id}} M_2 \rightarrow S^2$, let $b_j$ ($j=1,\ldots, k_1+k_2$) denote the critical values of $f_1 \#_{\mathrm{id}} f_2$. 
Here, the points $b_1, \ldots, b_{k_1}$ (resp. $b_{k_1+1}, \ldots, b_{k_2}$) are the critical values of $f_1$ (resp. $f_2$). 
We may assume that $b_j=\varepsilon e^{2\pi i j/(k_1+k_2)} \in S^2(=\C \cup \{\infty\})$ for some small $\varepsilon>0$. 
Let $Y_{+}^{(0)}$ denote a subspace of $D_1(0) \times \CP^2$ which is obtained by applying the discussion in Section \ref{section: gluing} to $f_1 \#_{\mathrm{id}} f_2$. 
Choose a smooth embedded loop $\alpha \colon [0,1] \rightarrow D_1(0) \subset S^2$ surrounding only $b_{k_1+1}, \ldots, b_{k_1+k_2}$ among the critical values. 
Since $\alpha$ is a Lagrangian submanifold of $(S^2, \omega_{0})$, there exists a tubular neighborhood $\nu(\alpha)$ of $\alpha$ which is symplectomorphic to $([-r, r] \times S^1, dt \wedge d\theta)$, where $(t, \theta)$ are the standard coordinates on $[-r, r] \times S^1$. 
Since the global braid monodromy of $f_2$ is trivial, 
after applying an isotopy, if necessary, with the symplectic identification $\nu(\alpha) \times \CP^2 \cong [-r, r] \times S^1 \times \CP^2$, we may assume that $Y_{+}^{(0)} \cap ([-r, r] \times S^1 \times \CP^2)$ is given by 
\begin{align}\label{model: trivial}
	\{(t,\theta, (x:y:z)) \mid (x^2-\varepsilon_{1}^2 z^2)(x^{2g}-z^{2g})-y^2z^{2g}=0\}
\end{align}
for some small $\varepsilon_{1}>0$. 
The space $Y_{+}^{(0)} \cap ([-r, r] \times S^1 \times \CP^2)$ is smooth away from $[-r,r] \times S^1 \times \{(0:1:0)\}$. 
Take a smooth function $\chi \colon [-r,r] \rightarrow \R$ such that $\chi(t)=0$ near $-r$ and $\chi(t)=1$ near $r$. 
For a given integer $n$, define $Y_{+}^{(0)}(\alpha, n)$ to be $Y_{+}^{(0)}$ outside $[-r, r] \times S^1 \times \CP^2$, and on $[-r, r] \times S^1 \times \CP^2$
\begin{align}\label{model': trivial}
	\{(t,\theta, (x:y:z)) \mid (x^2-\varepsilon_{1}^2 e^{2n\pi i \chi(t)}z^2)(x^{2g}-z^{2g})-y^2z^{2g}=0\}.
\end{align}
We easily see that the singular point set $Y_{+}^{(0)}(\alpha,n)_{\mathrm{sing}}$ of $Y_{+}^{(0)}(\alpha, n)$ agrees with that of $Y_{+}^{(0)}$, namely $Y_{+,\, \mathrm{sing}}^{(0)}=D_{1}(0) \times \{(0:1:0)\}$. 

So far, simple closed curves on $\Sigma_g$ have not appeared in the above discussion; they come into play from now. 
Take a point $b_0 \in D_1(0)$ away from the disk bounded by $\alpha$ and fix an identification of the fiber of the singular fibration $p \colon Y \rightarrow S^2$ over $b_0$ with $\Sigma_g$. 
We also take a smooth embedded path $\eta$ in $D_1(0) \setminus \{b_1, \ldots, b_{k_1+k_2}\}$ connecting $b_0$ to the point $b'_0=(0,0)$ of $\nu(\alpha)\cong [-r,r] \times S^1$. 
Then, as $p$ is a symplectic fibration along $\eta$, one can define symplectic parallel transport along $\eta$, which sends the simple closed curve $c$ on $\Sigma_{g} \cong F_{b_0}$ in Lemma \ref{lem: Luttinger} to a simple closed curve $c'$ on the fiber $F_{b'_0}$ of $p$ over $b'_0$. 
Note that as $c$ is non-separating, so is $c'$. 
By \cite[Lemma 1]{Fuller} for example, $c'$ is the lift of an embedded arc connecting two branch points of the double cover $F_{b'_0} \rightarrow \CP^1$ induced by the double branched cover $Y \rightarrow Bl(S^2 \times \CP^1)$ or $Y \rightarrow Bl(S^2 \tilde{\times} \CP^1)$. 
In (\ref{model: trivial}), the map $\{(0,0)\} \times (\CP^2 \setminus \{(0:1:0)\}) \rightarrow \CP^1$, $(0,0,(x:y:z)) \mapsto (x:z)$ defines the double cover of its image with branch points $(\pm\varepsilon_1:1)$, $(e^{2\pi i j/2g} : 1)$ ($j=1,\ldots, 2g$). 
Thus, with the help of isotopy, we may assume $c'$ to be the lift of the embedded arc $[0,1] \ni t \mapsto ((2t-1)\varepsilon_1:1) \in \CP^1$ connecting $(\pm\varepsilon_1:1)$; this is the simple closed curve given as the set of points $((0,0), ((2t-1)\varepsilon_1:y:1)) \in \{(0,0)\} \times \CP^2$ with $t \in [0,1]$ satisfying the defining equation of (\ref{model: trivial}). 
As a consequence, topologically the space (\ref{model': trivial}) can be regarded as the one obtained by changing (\ref{model: trivial}) near $c'$. 
Now that $c$ has been related to the discussion, let $Y_{+}^{(0)}(\alpha,c,n)$ denote $Y_{+}^{(0)}(\alpha, n)$.

\begin{lemma}\label{lem: symplectic}
If the space $Y_{+}^{(0)}$ is a symplectic submanifold of $(D_1(0)\times \CP^2, K\omega_{0}+\omega_{\mathrm{FS}})$ away from $Y_{+,\, \mathrm{sing}}^{(0)}=D_{1}(0) \times \{(0:1:0)\}$ for some $K>0$, then so is $Y_{+}^{(0)}(\alpha, c, n)$ away from $Y_{+}^{(0)}(\alpha, c, n)_{\mathrm{sing}}$. 
\end{lemma}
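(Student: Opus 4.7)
The plan is to localize the argument and reduce to a pointwise computation on the twisted strip $N := [-r,r] \times S^1 \times \CP^2$. Outside $N$ the space $Y_{+}^{(0)}(\alpha,c,n)$ agrees with $Y_{+}^{(0)}$, so the hypothesis handles the complement and the entire issue is confined to $N$. There, equation (\ref{model': trivial}) exhibits $Y_{+}^{(0)}(\alpha,c,n) \cap N$ as a family of projective plane curves
\[
V_{s(t)} = \{(x^2 - s(t) z^2)(x^{2g} - z^{2g}) = y^2 z^{2g}\} \subset \CP^2, \qquad s(t) = \epsilon_1^2 e^{2\pi i n \chi(t)},
\]
and crucially the defining polynomial is independent of $\theta$.

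First I would verify that every fiber $V_{s(t)}$ is smooth away from $(0:1:0)$. A direct calculation of partial derivatives in the affine charts $\{z=1\}$ and $\{x=1\}$ shows that a singularity of $V_{s(t)}$ would force a collision among the branch points $\{\pm\sqrt{s(t)}\} \cup \{\zeta \in \C : \zeta^{2g}=1\}$ of the induced double cover $V_{s(t)} \dashrightarrow \CP^1$, which is excluded uniformly in $t$ by choosing $\epsilon_1 > 0$ small. Consequently $Y_{+}^{(0)}(\alpha,c,n)_{\mathrm{sing}} = [-r,r] \times S^1 \times \{(0:1:0)\}$, matching the untwisted case; the modification also glues smoothly to $Y_{+}^{(0)}$ because $\chi(\pm r) \in \{0,1\}$ makes $s(\pm r) = \epsilon_1^2$.

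The main step is a pointwise matrix computation. At any smooth point $(t_0,\theta_0,p) \in Y_{+}^{(0)}(\alpha,c,n) \cap N$, the $\theta$-independence of the defining equation makes $\partial_\theta$ tangent, the implicit function theorem provides a horizontal lift $\partial_t + X_0$ with $X_0 \in T_p\CP^2$, and a real basis $\{e_1,e_2\}$ of $T_p V_{s(t_0)}$ completes the basis. In this ordered basis the restriction of $\omega = K\omega_0 + \omega_{\mathrm{FS}}$ has matrix
\[
\begin{pmatrix}
0 & -K & 0 & 0 \\
K & 0 & a_1 & a_2 \\
0 & -a_1 & 0 & b \\
0 & -a_2 & -b & 0
\end{pmatrix},
\]
since $\partial_\theta$ has no $\CP^2$-component and therefore pairs trivially against $T_p V_{s(t_0)}$; here $a_i = \omega_{\mathrm{FS}}(X_0,e_i)$ and $b = \omega_{\mathrm{FS}}(e_1,e_2)$. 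Expansion along the first column gives determinant $K^2 b^2$, which is non-zero because $V_{s(t_0)}$ is a complex curve in $(\CP^2, \omega_{\mathrm{FS}})$ at $p$ and hence symplectic.

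The only real subtlety I expect is the uniform smoothness of the moving fiber $V_{s(t)}$ as $s$ traces a loop around the origin; everything else is an immediate consequence of the matrix determinant being $K^2 b^2$. Once $\epsilon_1$ is small enough that the branch points never collide along the deformation, the symplectic form stays non-degenerate on every tangent space, which is exactly what the lemma asserts.
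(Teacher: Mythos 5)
Your proof is correct and takes essentially the same route as the paper: localize to $\nu(\alpha)\times\CP^2$, use the $\theta$-independence of the defining equation and the fact that each fiber is a smooth complex (hence $\omega_{\mathrm{FS}}$-symplectic) curve, and conclude nondegeneracy of $(K\omega_{0}+\omega_{\mathrm{FS}})|_{TY_{+}^{(0)}(\alpha,c,n)_{\mathrm{sm}}}$ pointwise. The paper organizes the last step via the splitting $T_{p_0}Y=T_{p_0}F_{(t,\theta)}\oplus\bigl(T_{p_0}Y\cap (T_{p_0}F_{(t,\theta)})^{\perp}\bigr)$ and pairs $\partial/\partial t+v$ with $\partial/\partial\theta$ to get the value $K>0$, whereas you compute the $4\times 4$ determinant $K^{2}b^{2}$ with an arbitrary horizontal lift (and you make explicit the fiber-smoothness/branch-point check that the paper asserts through the nondegenerate Jacobian); these are the same argument in slightly different linear-algebra packaging.
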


\begin{proof}
Set $Y_{+}^{(0)}(\alpha, c, n)_{\mathrm{sm}}=Y_{+}^{(0)}(\alpha, c, n) \setminus Y_{+}^{(0)}(\alpha, c, n)_{\mathrm{sing}}$.
Since $Y_{+}^{(0)}(\alpha, c, n)_{\mathrm{sm}}$ agrees with $Y_{+}^{(0)} \setminus Y_{+,\, \mathrm{sing}}^{(0)}$ outside $\nu(\alpha) \times \CP^2$, it suffices to prove the lemma on $\nu(\alpha) \times \CP^2$. 
Moreover, when $z=0$, the space $Y_{+}^{(0)}(\alpha, c, n)$ over $\nu(\alpha)$ is included in $Y_{+}^{(0)}(\alpha, c, n)_{\mathrm{sing}}$. 
Thus, we will show that $Y_{+}^{(0)}(\alpha, c, n)_{\mathrm{sm}}$ is a symplectic submanifold of $(\nu(\alpha) \times \C^2, \omega_{0}+\omega_{\mathrm{FS}}|_{\C^2})$, regarding $\C^2$ as $\CP^2 \setminus \{z = 0\}$. 

Let $g(t,x,y,z)$ denote the function defining $Y_{+}^{(0)}(\alpha, c, n)$ in (\ref{model': trivial}). 
The intersection $Y_{+}^{(0)}(\alpha, c, n) \cap (\nu(\alpha) \times \C^2)$ is written as $\{g(t, x,y,1)=0\} \subset [-r,r] \times S^1 \times \C^2$. 
The non-degeneracy of the Jacobi matrix $(\del g/ \del x, \del g/\del y)$ at any point $(t,\theta, x,y)$ shows that for a fixed $(t, \theta) \in [-r, r] \times S^1$, the zero set $F_{(t,\theta)} \coloneqq \{g(t,x,y,1)=0\}$ is a complex submanifold of $\{(t,\theta) \} \times \C^2$. 
Moreover, it is a symplectic submanifold of $([-r,r] \times S^1 \times \C^2, Kdt \wedge d\theta+\omega_{\mathrm{FS}}|_{\C^2})$. 
At any point $p_0=(t,\theta, x,y)$, we have the splitting 
\[
	T_{p_0}Y_{+}^{(0)}(\alpha, c, n)_{\mathrm{sm}}=T_{p_0}F_{(t,\theta)} \oplus (T_{p_0}Y_{+}^{(0)}(\alpha, c, n)_{\mathrm{sm}} \cap (T_{p_0}F_{(t,\theta)})^{\perp Kdt \wedge d\theta +\omega_{\mathrm{FS}}|_{\C^2}}), 
\]
where the symplectic complement is taken in $T_{p_0}([-r,r] \times S^1 \times \C^2)$. 
Thus, all we have to prove is that the second summand on the right-hand side is symplectic.

Write $x=x_1+ix_2$ and $y=y_1+iy_2$ with real numbers $x_1,x_2,y_1,y_2$ and define real-valued functions $\xi(t,x_1, x_2, y_1,y_2)$ and $\eta(t, x_1, x_2, y_1,y_2)$ by 
\[
	g(t, x,y,1)=\xi(t, x_1, x_2, y_1,y_2)+i\eta(t, x_1, x_2, y_1,y_2). 
\]
Note that the tangent space $T_{p_0}Y_{+}^{(0)}(\alpha, c, n)_{\mathrm{sm}}$ agrees with the kernel of the differential $Dg$ at $p_0$, expressed by the matrix
\[
	\left(
   \begin{array}{cccccc}
	\del \xi/\del t & \del \xi/ \del \theta & \del \xi/\del x_1  & \del \xi/ \del x_2 & \del \xi/ \del y_1 & \del \xi/\del y_2 \\
       \del \eta/\del t & \del \eta/ \del \theta & \del \eta/\del x_1  & \del \eta/ \del x_2 & \del \eta/ \del y_1 & \del \eta/\del y_2
    \end{array}
\right).
\]
The non-degeneracy of $(\del g/ \del x, \del g/\del y)$ implies the non-degeneracy of 
\[
	\left(
   \begin{array}{cccc}
    \del \xi/\del x_1  & \del \xi/ \del x_2 & \del \xi/ \del y_1 & \del \xi/\del y_2\\
     \del \eta/\del x_1  & \del \eta/ \del x_2 & \del \eta/ \del y_1 & \del \eta/\del y_2
    \end{array}
\right), 
\]
which leads to the existence of a vector $v$ in $T_{p_0}(\{(t,\theta)\} \times \C^2)\cap (T_{p_0}F_{(t,\theta)})^{\perp Kdt \wedge d\theta +\omega_{\mathrm{FS}}|_{\C^2}}$ such that $\del/\del t+v \in T_{p_0}Y_{+}^{(0)}(\alpha, c, n)_{\mathrm{sm}}$. 
Moreover, $\xi$ and $\eta$ are independent of $\theta$, we see that $\del/\del \theta \in T_{p_0}Y_{+}^{(0)}(\alpha, c, n)_{\mathrm{sm}}$. 
The two vectors $\del/\del t+v$ and $\del/ \del \theta$ lie in the symplectic complement $(T_{p_0}F_{(t,\theta)})^{\perp Kdt \wedge d\theta +\omega_{\mathrm{FS}}|_{\C^2}}$ and satisfy 
\[
	(Kdt \wedge d\theta +\omega_{\mathrm{FS}}|_{\C^2})(\del/\del t+v, \del/\del \theta)=K>0. 
\]
This completes the proof. 
\end{proof}

\begin{proof}[Proof of Lemma \ref{lem: Luttinger}]
In this proof, we use the same notations as above. 
Let $Y(\alpha, c, n)$ be the submanifold of $X$ obtained from $Y^{(0)}(\alpha, c, n)$ by blow-ups as in Section \ref{section: gluing}. 
It follows easily from Lemma \ref{lem: symplectic} that $Y(\alpha, c, n)$ is a symplectic submanifold of $(X,\omega)$. 

 Since $Y^{(0)}(\alpha, c, n)_{\mathrm{sing}}=Y^{(0)}_{\mathrm{sing}}$, the submanifold $Y(\alpha, c, n)$ can be also seen as a submanifold obtained from $Y$ by changing it only over $\nu$. 
We examine the effect of this change on the monodromy. 
Recall that by construction, $Y$ and $Y(\alpha, c, n)$ are fiber bundles over $S^2 \setminus \{b_1, \ldots, b_{k_1+k_2}\}$. 
Take an embedded path $\gamma$ connecting $b_0$ to an arbitrary point $b_i \in \{b_{k_1+1}, \ldots, b_{k_1+k_2}\}$ and intersecting $\alpha$ at one point. 
Let $\hat{\gamma}$ be a loop based at $b_0$ associated to $\gamma$ as in Section \ref{section: LF}. 
Let $p' \colon Y' \rightarrow S^2$ and $p'_n \colon Y'(\alpha, c, n) \rightarrow S^2$  denote Lefschetz fibrations obtained from $p \colon Y \rightarrow S^2$ and $p_n \colon Y(\alpha, c, n) \rightarrow S^2$, respectively, by repeated blow-downs of $(-1)$-spheres derived from the reducible singular fibers, where $p_n \colon Y(\alpha, c, n) \rightarrow S^2$ is the singular fibration naturally derived from $p$. 
Suppose that the monodromy of $p' \colon Y' \rightarrow S^2$ along $\hat{\gamma}$ is given by $\tau_{c_i}$. 
Then, in light of the choice of the local model (\ref{model: trivial}), we find that the monodromy of $p'_n$ along $\hat{\gamma}$ agrees with $\tau_{\tau_c^{n}(c_i)}$. 
Hence, $p'$ and $p'_n$ have collections of vanishing cycles 
\[
	(c_1, \ldots, c_{k_1}, c_{k_1+1}, \ldots, c_{k_1+k_2}) 
\textrm{\ and\ } 
	(c_1, \ldots, c_{k_1}, \tau_{c}^n(c_{k_1+1}), \ldots, \tau_c^{n}(c_{k_1+k_2})),
\]
respectively, for an appropriately chosen basis for $\pi_1(S^2 \setminus \{b_1, \ldots, b_{k_1+k_2}\}, b_0)$. 
Therefore, by the aforementioned result of Kas \cite{Kas}, $Y'(\alpha, c, n)$ is diffeomorphic to $M_1 \#_{\tau^n_{c}} M_2$, which shows that $Y(\alpha, c, n)$ is diffeomorphic to a blow-up of $M_1 \#_{\tau^n_{c}} M_2$.

We next see the homology class of $Y(\alpha, c, n)$. 
By Proposition \ref{prop: homology}, it is sufficient to check that the algebraic intersection number of $Y^{(0)}$ and the section $\sigma_0$ is equal to that of $Y^{(0)}(\alpha, c, n)$ and $\sigma_0$. 
The section $\sigma_0$ agrees with the map $s \mapsto (s, (1:0:0))$ when restricted to $\nu$, so it intersects with neither $Y^{(0)}$ nor  $Y^{(0)}(\alpha, c, n)$ on $\nu(\alpha) \subset S^2$. Thus, this proves $[Y(\alpha, c, n)]=[Y] \in H_4(X; \Z)$. 
\end{proof}

We say a submanifold is obtained by the \textit{$n$-fold twisting construction} for the pair $(Y,c)$ over $\alpha$ if it is obtained from $Y$ by replacing $Y^{(0)}$ with the twisted singular subspace $Y^{(0)}(\alpha, c, n)$ as in the above proof. 

\subsection{Lefschetz fibrations on noncomplex smooth $4$-manifolds}\label{section: OS}

Let $\Sigma_2$ be a closed, connected, oriented, smooth surface of genus $2$ and let $c, c_1, \ldots, c_4$ be the simple closed curves on $\Sigma_2$ depicted as in Figure \ref{fig: SCC}. 
Notice that as 
\begin{align*}
\M(\Sigma_2)=\mathcal{H}(\Sigma_2), 
\end{align*}
every Lefschetz fibration of genus $2$ is hyperelliptic. 
Matsumoto \cite{Mat} gave a Lefschetz fibration $f_0 \colon M_0 \rightarrow S^2$ of genus $2$ whose collection of vanishing cycles is
\[
	(c_1,\ldots, c_4, c_1, \ldots , c_4). 
\]
This led Ozbagci and Stipsicz \cite{OS00} to a family of genus-$2$ Lefschetz fibrations whose total spaces do not admit complex structures. 
We consider a variant of their family given by 
\[
	(f_0\#_{\mathrm{id}} f_0) \#_{\tau^n_c}(f_0\#_{\mathrm{id}} f_0) \colon  (M_0 \#_{\mathrm{id}} M_0) \#_{\tau^n_c} (M_0 \#_{\mathrm{id}} M_0) \rightarrow S^2 
\]
for every $n \in \Z_{\geq 0}$. 
For convenience, we set 
\begin{align}\label{eqn: Mn}
	M(n)= (M_0 \#_{\mathrm{id}} M_0) \#_{\tau^n_c} (M_0 \#_{\mathrm{id}} M_0) \quad \textrm{and} \quad g_n=(f_0\#_{\mathrm{id}} f_0) \#_{\tau^n_c}(f_0\#_{\mathrm{id}} f_0).
\end{align}
The collection of vanishing cycles of each Lefschetz fibration $g_n$ consists of $32$ simple closed curves 
\begin{gather*}
	c_1,\ldots, c_4, c_1, \ldots , c_4, c_1,\ldots, c_4, c_1, \ldots , c_4, \\
	\tau_{c}^n(c_1), \ldots, \tau_{c}^n(c_4), \tau_{c}^n(c_1), \ldots, \tau_{c}^n(c_4),\tau_{c}^n(c_1), \ldots, \tau_{c}^n(c_4), \tau_{c}^n(c_1), \ldots, \tau_{c}^n(c_4).
\end{gather*}
The next proposition essentially follows from \cite{OS00}. 

\begin{figure}[ht]
\vspace{11pt}
		\begin{overpic}[width=180pt,clip]{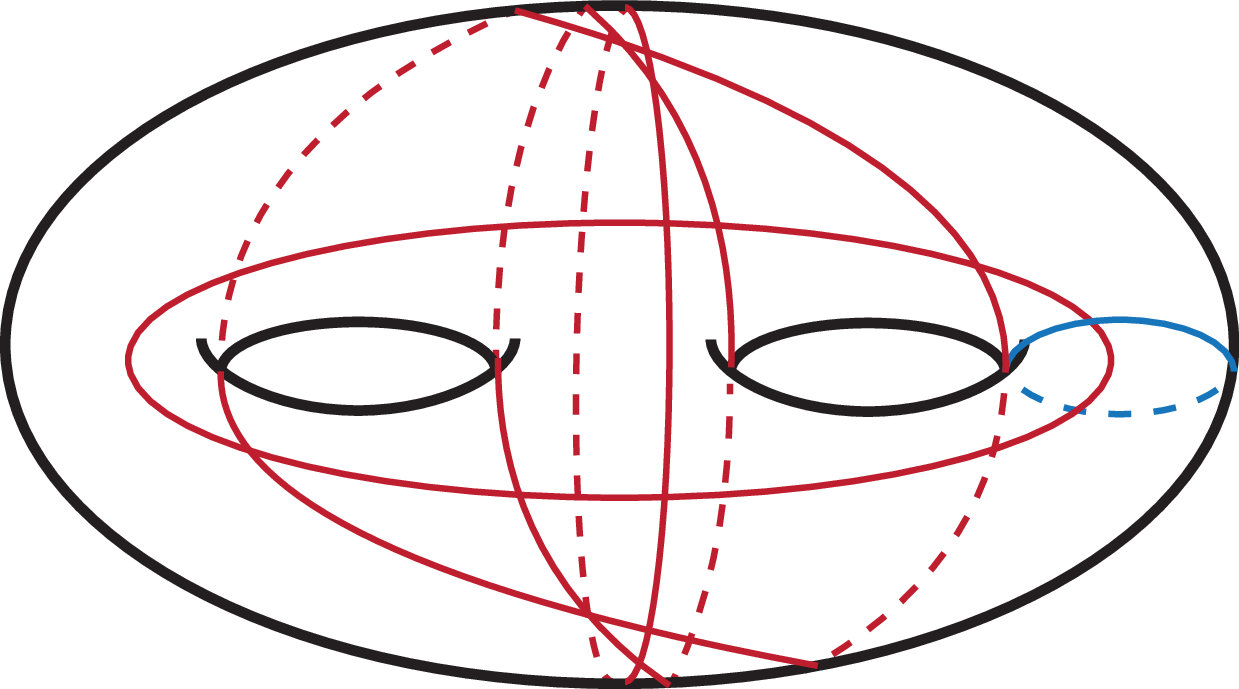}
	 %\linethickness{3pt}  
	\put(7,46){$c_1$}
	\put(86,46){$c_2$}
	\put(68.5,18){$c_{3}$}
	\put(52,12){$c_4$}
	\put(168,55){$c$}
	\put(168,85){$\Sigma_2$}
	\end{overpic}
	\caption{Simple closed curves $c, c_1, \ldots, c_4$ on $\Sigma_2$.}
	\label{fig: SCC}
\end{figure}

\begin{proposition}\label{prop: Mn}
For the $4$-manifold $M(n)$ ($n>0$), we have the following. 
\begin{enumerate}
\item $\pi_1(M(n)) \cong \Z \oplus \Z_n$. 
\item The $4$-manifold $M(n)$ does not admit a complex structure, and neither does a blow-up of $M(n)$. 
\end{enumerate}
\end{proposition}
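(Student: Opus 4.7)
For part (1), the plan is to apply the standard presentation $\pi_1(M) \cong \pi_1(\Sigma_g)/N(c_1, \ldots, c_N)$ of the fundamental group of a genus-$g$ Lefschetz fibration over $S^2$, where $N(\cdot)$ denotes the normal closure and $(c_1, \ldots, c_N)$ is the list of vanishing cycles. Matsumoto's computation gives $\pi_1(M_0) \cong \Z \oplus \Z$, so $\pi_1(\Sigma_2)/N(c_1, c_2, c_3, c_4) \cong \Z \oplus \Z$; the trivial fiber sum $M_0 \#_{\mathrm{id}} M_0$ repeats the same vanishing cycles, so its fundamental group is again $\Z \oplus \Z$. The vanishing cycles of $g_n$ consist of the $c_i$'s together with $\tau_c^n(c_i)$, so $\pi_1(M(n))$ is the further quotient of $\Z \oplus \Z$ by the classes of $\tau_c^n(c_i)$, $i=1,\ldots,4$. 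Since the quotient is already abelian, one may compute in $H_1$: by the Picard--Lefschetz formula $[\tau_c^n(c_i)] = [c_i] + n\langle c_i, c\rangle [c]$, and since $[c_i] = 0$ in the quotient, the new relations read $n\langle c_i, c\rangle [c] = 0$. Choosing $c$ as in Figure \ref{fig: SCC} so that $|\langle c_3, c \rangle| = 1$ and $[c]$ generates one of the two $\Z$ summands of $\pi_1(M_0)$, these relations collapse to the single relation $n[c] = 0$, yielding $\pi_1(M(n)) \cong \Z \oplus \Z_n$.

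For part (2), the plan is to exclude complex structures via the Enriques--Kodaira classification, following the strategy of \cite{OS00}. Since $b_1$ is preserved under blow-up, any blow-up of $M(n)$ has $b_1 = 1$, which is odd, so any complex structure on $M(n) \# k\overline{\CP}^2$ is non-K\"ahler by Buchdahl--Lamari. Next I would compute $e(M(n)) = 28$ and $\sigma(M(n)) = -16$ using the fiber-sum formulas $e(X_1 \#_\varphi X_2) = e(X_1) + e(X_2) - 2e(\Sigma_g)$ and $\sigma(X_1 \#_\varphi X_2) = \sigma(X_1) + \sigma(X_2)$, starting from $e(M_0) = 4$ and $\sigma(M_0) = -4$; this yields $b_2^+(M(n)) = 6$ and $c_1^2(M(n)) = 3\sigma + 2e = 8$. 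Among non-K\"ahler minimal complex surfaces with $b_1 = 1$, one has Hopf, Inoue, and secondary Kodaira surfaces (all with $b_2 = 0$), class $\mathrm{VII}$ surfaces of positive $b_2$ (with $b_2^+ = 0$), and non-K\"ahler properly elliptic surfaces (with $c_1^2 = 0$). Since $b_2^+ = 6$ is a blow-up invariant of our manifold, every case except properly elliptic is immediately ruled out.

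The main obstacle is the properly elliptic case. A minimal properly elliptic model of a blow-up $M(n) \# k\overline{\CP}^2$ must satisfy $c_1^2 = 0$ and $b_2^+ = 6$, forcing a precise numerical match with $k \geq 8$ blow-downs. Even granting this, one must preclude a genuine elliptic fibration on the complex model underlying $M(n) \# k\overline{\CP}^2$. Following \cite{OS00}, this is handled using the Seiberg--Witten invariants of the symplectic genus-$2$ Lefschetz fibration $g_n$, which, since the fiber genus exceeds $1$, pin the canonical class down to a form incompatible with that of a blow-up of an elliptic surface. Because the twisting operation preserves the genus-$2$ Lefschetz structure of $g_n$ and yields $b_1 = 1$, the same Seiberg--Witten obstruction applies verbatim to every $M(n)$ with $n>0$ and to every blow-up, completing the argument.
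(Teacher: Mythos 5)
Your part (1) follows the same route as the paper (the presentation $\pi_1(M)\cong\pi_1(\Sigma_g)/N(c_1,\dots,c_N)$ applied to the vanishing cycles of $g_n$), except that you carry out the final group computation by hand via Picard--Lefschetz instead of citing \cite[Theorem 1.2]{OS00}. Two small caveats: this presentation is only valid when the fibration admits a section (the paper invokes \cite[Lemma 3.2]{ABKP} and notes that $f_0$, hence each $g_n$, has one), so you should say why a section exists; and your conclusion $\Z\oplus\Z_n$ hinges on the unverified claims that $\gcd_i|\langle c_i,c\rangle|=1$ and that the image of $[c]$ generates a direct summand of $\pi_1(M_0)\cong\Z^2$. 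These are checkable from Figure \ref{fig: SCC} and are exactly what \cite{OS00} establishes, but as written they are assumptions, not proofs. Your numerical computations in part (2) ($\chi=28$, $\sigma=-16$, $b_2^+=6$, $c_1^2=8$) agree with the paper.

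The genuine gap is the properly elliptic case in part (2). Your appeal to Seiberg--Witten invariants is not an argument: you do not identify which basic classes you would compute, why a genus-$2$ Lefschetz fibration ``pins down'' the canonical class, or why the conclusion persists under arbitrary blow-up (note that for $k\geq 8$ blow-ups your $c_1^2$ count is, as you observe, consistent with an elliptic model, so something must be said). In fact no gauge theory is needed, and the case you left open closes by the same Betti-number argument you used for the others: a relatively minimal elliptic surface has odd $b_1$ if and only if all its singular fibers are multiple fibers with smooth reduction, in which case its Euler characteristic vanishes; hence a minimal non-K\"ahler elliptic surface with $b_1=1$ has $b_2=\chi-2+2b_1=0$, so $b_2^+=0$. (Equivalently: every non-K\"ahler surface has $b_2^+=2p_g$, and every surface with $b_1=1$ --- class VII or blown-up non-K\"ahler elliptic --- has $p_g=0$.) Since $b_2^+$ is a blow-up invariant, \emph{every} compact complex surface with $b_1=1$ has $b_2^+=0$, which contradicts $b_2^+(M(n))=6$ in all cases at once, including all blow-ups of $M(n)$. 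This is precisely why the paper can quote the proof of \cite[Theorem 1.3]{OS00} in the stronger form ``$b_2^+\geq 1$ and $\pi_1\cong\Z\oplus\Z_n$ already preclude a complex structure''; a Seiberg--Witten argument tied to the specific fibration $g_n$ could not yield a statement of that generality.
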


\begin{proof}
To prove the first assertion, recall the fact that the fundamental group of the total space of a Lefschetz fibration is isomorphic to the quotient of that of a regular fiber by the normal subgroup generated by all the vanishing cycles if the fibration has a section \cite[Lemma 3.2]{ABKP}. 
It is known that $f_0$ has a section (see e.g. \cite[Remark 2.8]{OS04}), and hence so does each $g_n$. 
Thus, we have 
\[
	\pi_1(M(n)) \cong \pi_1(\Sigma_2)/ \langle c_1, c_2, c_3, c_4, \tau_c^n(c_1), \tau_c^n(c_2), \tau_c^n(c_3), \tau_c^n(c_4) \rangle,
\]
which is isomorphic to $\Z \oplus \Z_n$ by \cite[Theorem 1.2]{OS00}. 

To prove the second assertion, we first compute $b_2^{+}(M(n))$, the number of positive eigenvalues of the intersection form on $H_2(M(n); \Z)$. 
A handlebody structure on $M(n)$ associated to the fibration $g_n$ proves the Euler characteristic $\chi(M(n))=28$. 
Since $M_0$ is known to be diffeomorphic to $(S^2 \times T^2 ) \# 4\overline{\CP}^2$, the Novikov additivity shows the signature $\sigma(M(n))=-16$. 
Thus, the identity $\sigma(M(n))+\chi(M(n))=2(1-b_1(M(n))+b_2^{+}(M(n)))$ concludes $b_2^{+}(M(n))=6$.
The proof of \cite[Theorem 1.3]{OS00} actually shows that a smooth, closed, connected $4$-manifold $M$ with $b_2^{+}(M)\geq 1$ and $\pi_1(M) \cong \Z \oplus \Z_n$ does not admit any complex structure. 
Therefore, combining this result with the fact that a blow-up preserves $\pi_1$ and $b_2^{+}$ implies the second assertion. 
\end{proof}

Let us compute the global braid monodromies associated to $g_{n}$. 
Since 
\begin{align*}
(\tau_{c_1} \cdots \tau_{c_4})^2=1, 
\end{align*}
the distinguished lift $\hat{\tau}$ of $(\overline{\tau}_{c_1} \cdots \overline{\tau}_{c_4})^2 \in \M(S^2, 2g+2)$ is either $1$ or $(\hat{\sigma}_1 \cdots \hat{\sigma}_5)^6$. 
Thus, the global braid monodromy associated to the fibration $f_0 \#_{\mathrm{id}} f_0$, given by the lift of $(\overline{\tau}_{c_1} \cdots \overline{\tau}_{c_4})^4$, is $\hat{\tau}^2=1$. 
Furthermore, simultaneous conjugation does not affect the global braid monodromy, and the lift of $(\overline{\tau}_{\tau^n_c(c_1)} \cdots \overline{\tau}_{\tau^n_c(c_4)})^4$ coincides with $\hat{\tau}^2=1$ for any $n \in \Z_{> 0}$. 
This leads to the triviality of the global braid monodromy associated to each $g_n$.

\subsection{Proof of the main theorem and its corollary}\label{section: proof of main thm}

\begin{proof}[Proof of Theorem \ref{thm: main}]
By Theorem \ref{thm: embedding}, to the fiber sum $g_0 \colon M(0) \rightarrow S^2$ of two copies of $f_0 \#_{\mathrm{id}} f_0$ defined by (\ref{eqn: Mn}), we associate a $4$-dimensional symplectic submanifold $Y$ of a blow-up $(X, \omega)$ of a $\CP^2$-bundle over $S^2$. 
In fact, this $\CP^2$-bundle is trivial since the global monodromy associated to $g_0$ is trivial. 
To obtain the desired family of symplectic submanifolds, take a smooth embedded loop $\alpha$ in $S^2$ which surrounds the critical values of the latter $f_0 \#_{\mathrm{id}} f_0 \colon M_0 \#_{\mathrm{id}} M_0 \rightarrow S^2$ of the fiber sum $g_0$ and which is related to $c$ as in Section \ref{section: twisting}.  
Since the global braid monodromy associated to it is trivial, we can perform the $n$-fold twisting construction for the pair $(Y,c)$ over $\alpha$. 
Then, according to Lemma \ref{lem: Luttinger}, the resulting submanifold $Y(\alpha, c, n)$ is symplectic in $(X, \omega)$ and diffeomorphic to a blow-up of $(M_0 \#_{\mathrm{id}} M_0) \#_{\tau^n_c} (M_0 \#_{\mathrm{id}} M_0) $, i.e., $M(n)$. 
Proposition \ref{prop: Mn} shows that each $Y(\alpha, c, n)$ ($n>0$)  has the fundamental group $ \Z \oplus \Z_n$ and does not admit a complex structure. 
In particular, the submanifolds $Y(\alpha, c, n)$'s are mutually homotopy inequivalent.

To finish the proof, we verify the simply connectedness of $X$. 
Recall the construction of $X$: 
it is obtained from the simply connected $6$-manifold $S^2 \times \CP^2$ by successive blow-ups at points and submanifolds diffeomorphic to $S^2$. 
Since the blow-up operation does not affect the fundamental group, we conclude that $X$ is simply connected. 
\end{proof}

\begin{proof}[Proof of Corollary \ref{cor: main}]
Let us denote the closed symplectic $6$-manifold obtained in the proof of Theorem \ref{thm: main} by $(X,\omega)$ and the symplectic submanifold $Y(\alpha, c, n)$ of $(X,\omega)$ by $Y_n$. 
Then, consider the product symplectic manifold 
\[
	(X \times (S^2)^m, \omega+\Omega),  
\]
where $\Omega$ is a symplectic form on the $m$-fold product $(S^2)^m$ of $S^2$. 
As $X$ is simply connected, so is $X \times (S^2)^m$.
The submanifolds $Y_n$'s are homotopy inequivalent but homologous, so the family $\{Y_n \times (S^2)^m\}_{n \in \Z_{>0}}$ of symplectic submanifolds in $(X \times (S^2)^m, \omega+\Omega)$ is what we desired.
\end{proof}
\appendix

\section{Symplectic submanifolds of $(\CP^3,\omega_{\mathrm{FS}})$}\label{appendix}

In this appendix, we will give an explicit proof of the following fact, which is well known (see e.g. \cite[Remark 4.8]{Li}) but whose proof has not appeared in the literature. 

\begin{theorem} \label{thm: uniquness}
Let $Y$ be a connected symplectic submanifold of $(\CP^3, \omega_{\mathrm{FS}})$ of dimension $4$ in the homology class $k[\CP^2] \in H_4(\CP^3; \Z)$. 
If $1 \leq k \leq 3$, the submanifold $Y$ is diffeomorphic to a smooth projective hypersurface of degree $k$. 
\end{theorem}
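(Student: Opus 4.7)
The plan is to extract enough Chern-number information about $Y$ from its embedding $Y\hookrightarrow\CP^3$ that the classification of symplectic $4$-manifolds with negative canonical class pins down the diffeomorphism type. Let $H\in H^{2}(\CP^3;\Z)$ be the hyperplane class. Since $[Y]=k[\CP^2]$ in $H_{4}(\CP^3;\Z)$, the normal bundle $N=N_{Y/\CP^3}$ satisfies $c_{1}(N)=k\,H|_{Y}$ and $(H|_{Y})^{2}=k$. The restriction $\omega\coloneqq\omega_{\mathrm{FS}}|_{Y}$ is a symplectic form on $Y$ whose class agrees, up to normalization, with $H|_{Y}$. Adjunction and $c_{1}(T\CP^3)=4H$ give
\[
c_{1}(TY)=c_{1}(T\CP^3)|_{Y}-c_{1}(N)=(4-k)\,H|_{Y}=(4-k)[\omega]\quad\text{in }H^{2}(Y;\Z).
\]
In particular $c_{1}(TY)^{2}=(4-k)^{2}k$, equal to $9,8,3$ for $k=1,2,3$, and the symplectic canonical class $K_{Y}=-c_{1}(TY)$ satisfies $K_{Y}\cdot[\omega]<0$.

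Next I would invoke the now-classical classification of symplectic $4$-manifolds with negative canonical class (results of McDuff, T.-J.\ Li--A.\ Liu, and Ohta--Ono): every closed symplectic $4$-manifold with $K\cdot[\omega]<0$ is diffeomorphic to a rational or ruled surface. A ruled surface of base genus $g\geq 1$ has $c_{1}^{2}=8(1-g)\leq 0$, incompatible with our positive values, so $Y$ is rational, hence diffeomorphic to $S^{2}\times S^{2}$ or to $\CP^2\#n\overline{\CP}^2$ for some $n\geq 0$.

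It then remains to identify the rational surface in each case. For $k=1$, $c_{1}^{2}=9$ forces $Y\cong\CP^2$. For $k=3$, $c_{1}^{2}=3$ forces $n=6$, and since $(S^{2}\times S^{2})\#\overline{\CP}^2\cong\CP^2\#2\overline{\CP}^2$, the only possibility is $\CP^2\#6\overline{\CP}^2$, matching the cubic surface. For $k=2$, $c_{1}^{2}=8$ leaves $S^{2}\times S^{2}$ and $\CP^2\#\overline{\CP}^2$ as candidates; the latter is ruled out because the intersection form on $\CP^2\#\overline{\CP}^2$ is $\langle 1\rangle\oplus\langle-1\rangle$, for which $a^{2}-b^{2}=2$ has no integer solutions, whereas $H|_{Y}$ is an integral class of square $2$. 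Hence $Y\cong S^{2}\times S^{2}$, the smooth quadric.

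The main obstacle is the input to the second step: one must verify that the rational/ruled classification applies without any extra hypothesis on $\pi_{1}(Y)$ or on minimality, since the computation above only produces Chern data. Fortunately the cited results cover arbitrary closed symplectic $4$-manifolds satisfying $K\cdot\omega<0$, so nothing further is needed beyond the numerical input pulled back from $\CP^3$. The elementary square-$2$ argument used in the $k=2$ case is the one small by-hand step required to distinguish the two diffeomorphism types sharing $c_{1}^{2}=8$.
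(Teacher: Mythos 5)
Your proof is correct, and its skeleton (Chern-class computation from the embedding, a Seiberg--Witten-theoretic classification theorem, then a little arithmetic to pin down the diffeomorphism type) parallels the paper's; the two key inputs, however, are genuinely different. The paper exploits the full monotonicity $c_1(TY)=(4-k)\eta$ with $4-k>0$, where $\eta=[\omega_{\mathrm{FS}}|_Y]$, and quotes Ohta--Ono's theorem that such a symplectic $4$-manifold is diffeomorphic to a Del Pezzo surface; this gives $b_1=0$ and $b_2^+=1$ for free, after which $b_2^-$ is read off from $c_1^2=3\sigma+2\chi$, and the $k=2$ ambiguity between $S^2\times S^2$ and $\CP^2\#\overline{\CP}^2$ is resolved by noting that $c_1(TY)=2\eta$ is even, so $Y$ is spin. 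You instead use only the weaker inequality $K_Y\cdot[\omega]<0$ together with the rational-or-ruled classification, exclude irrational ruled surfaces by positivity of $c_1^2$ (you should note that their blow-ups, with $c_1^2=8(1-g)-n\le 0$, are excluded by the same inequality), read off $n$ from $c_1^2=9-n$, and settle $k=2$ by the neat observation that $H|_Y$ is an integral class of square $2$, which the odd form $\langle 1\rangle\oplus\langle -1\rangle$ does not represent. Both routes are sound: yours rests on a formally weaker hypothesis and an elementary arithmetic disambiguation, while the paper's monotone/Del Pezzo route shortens the case analysis and supplies $b_1$ and $b_2^+$ directly. Two minor polish points: the classification of symplectic $4$-manifolds with $K\cdot\omega<0$ as rational or ruled is due principally to A.-K. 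Liu (with T.-J. Li--A.-K. Liu; McDuff's and Ohta--Ono's results are adjacent but distinct statements), and your normal-bundle/adjunction computation tacitly fixes an $\omega_{\mathrm{FS}}$-tame almost complex structure making $Y$ an almost complex submanifold, a step the paper makes explicit.
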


Recall that a smooth projective hypersurface of degree $1$ is diffeomorphic to $\CP^2$; one of degree $2$ is diffeomorphic to $S^2 \times S^2$; one of degree $3$ is diffeomorphic to $\CP^2 \# 6 \overline{\CP}^2$.

The proof we give in this appendix relies on a result about $4$-dimensional symplectic manifolds.  

\begin{theorem}[{Ohta and Ono \cite[Theorem 1.3]{OO_SW2}}]\label{thm: OO}
Let $(M, \omega)$ be a closed connected symplectic $4$-manifold with $c_1(TM)=\lambda[\omega]$ for some constant $\lambda>0$. 
Then, $M$ is diffeomorphic to a Del Pezzo surface.
\end{theorem}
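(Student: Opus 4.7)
The plan is to show that any such $Y$ is a ``symplectic Fano'' surface -- i.e., its first Chern class is a positive multiple of $[\omega_{\mathrm{FS}}|_Y]$ -- and then to invoke Ohta--Ono's Theorem \ref{thm: OO} to conclude that $Y$ is diffeomorphic to \emph{some} Del Pezzo surface. Topological invariants of $Y$ computed via the adjunction formula will then single out the correct Del Pezzo in each case.

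The first step is to compute Chern classes via adjunction. Choose an almost complex structure $J$ on $\CP^3$ compatible with $\omega_{\mathrm{FS}}$ and preserving $TY$; then the symplectic normal bundle $N_{Y/\CP^3}$ is a complex line bundle, and by the standard identification of its first Chern class with the restriction of the Poincar\'e dual of $[Y]$, we get $c_1(N_{Y/\CP^3})=kH|_Y$, where $H=[\omega_{\mathrm{FS}}]\in H^2(\CP^3;\Z)$ is the hyperplane class. Expanding $c(T\CP^3)=(1+H)^4$ and dividing by $c(N_{Y/\CP^3})=1+kH|_Y$, the adjunction formula yields
$$
c_1(TY)=(4-k)\,[\omega_{\mathrm{FS}}|_Y],\qquad c_2(TY)=(k^2-4k+6)\,[\omega_{\mathrm{FS}}|_Y]^2.
$$
For $1\leq k\leq 3$ the coefficient $4-k$ is strictly positive, so Theorem \ref{thm: OO} applies and $Y$ is diffeomorphic to some Del Pezzo surface; in particular $Y$ is simply connected.

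To identify which Del Pezzo, I would next compute $\chi(Y)$ and $\sigma(Y)$. From $[\omega_{\mathrm{FS}}|_Y]^2=H^2\cdot[Y]=k\int_{\CP^3}H^3=k$, one obtains
$$
\chi(Y)=k(k^2-4k+6),\qquad c_1(TY)^2=(4-k)^2\,k,\qquad \sigma(Y)=\tfrac{1}{3}\bigl(c_1(TY)^2-2\chi(Y)\bigr)=\tfrac{1}{3}k(4-k^2).
$$
For $k=1$ this yields $(\chi,\sigma)=(3,1)$, which among Del Pezzo surfaces is realized only by $\CP^2$. For $k=3$ it gives $(\chi,\sigma)=(9,-5)$, hence $(b_2^+,b_2^-)=(1,6)$, which among Del Pezzos uniquely corresponds to $\CP^2\#6\overline{\CP}^2$.

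The only real obstacle is the case $k=2$, where $(\chi,\sigma)=(4,0)$ is realized by both $S^2\times S^2$ and $\CP^2\#\overline{\CP}^2$. To distinguish them I would use parity: by the Chern class formula, when $k=2$ we have $c_1(TY)=2H|_Y$, so $w_2(TY)\equiv c_1(TY)\pmod 2$ vanishes and $Y$ is spin. Since $\CP^2\#\overline{\CP}^2$ has odd intersection form while $S^2\times S^2$ has even form, only the latter is spin among Del Pezzos, and therefore $Y\cong S^2\times S^2$. This completes the identification in all three cases.
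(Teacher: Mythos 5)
There is a genuine mismatch here: the statement you were asked to prove is Theorem \ref{thm: OO} itself, i.e.\ the Ohta--Ono classification asserting that a closed connected symplectic $4$-manifold with $c_1(TM)=\lambda[\omega]$, $\lambda>0$, is diffeomorphic to a Del Pezzo surface. Your proposal does not prove this; in your second paragraph you explicitly \emph{invoke} Theorem \ref{thm: OO} to pass from $c_1(TY)=(4-k)[\omega_{\mathrm{FS}}|_Y]$ to ``$Y$ is diffeomorphic to some Del Pezzo surface,'' so as an argument for Theorem \ref{thm: OO} it is circular. No amount of characteristic-class bookkeeping of the kind you carry out can establish the Ohta--Ono theorem: it is a deep result whose proof goes through Seiberg--Witten theory and Taubes' work on symplectic $4$-manifolds with $b_2^+=1$ (rationality/ruledness of symplectic Fano surfaces), which is precisely why the paper imports it with a citation to \cite{OO_SW2} rather than proving it.

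What you have actually written is a proof of the appendix's Theorem \ref{thm: uniquness}, and as such it is correct and essentially identical to the paper's own argument there: the adjunction computation $c(T\CP^3|_Y)=(1+\eta)^4=(1+c_1(TY)+c_2(TY))(1+k\eta)$ giving $c_1(TY)=(4-k)\eta$ and $c_2(TY)=(k^2-4k+6)\eta^2$, the evaluation $\eta^2=k$ leading to $\chi(Y)=k(k^2-4k+6)$ and $\sigma(Y)=\tfrac{1}{3}k(4-k^2)$, and the spin argument ($c_1(TY)=2\eta$ even, so the intersection form is even) to exclude $\CP^2\#\overline{\CP}^2$ when $k=2$ all match the paper. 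But none of this addresses the theorem in question; if the goal is Theorem \ref{thm: OO}, the correct response is either to reproduce the Seiberg--Witten argument of Ohta--Ono or to acknowledge it as an external input, not to derive its downstream corollary.
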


A \textit{Del Pezzo surface} is a smooth complex projective manifold with the ample anticanonical bundle. 
The possible diffeomorphism types of Del Pezzo surfaces are completely understood: blow-ups of $\CP^2$ at $m$ points ($0 \leq m \leq 8$) or $S^2 \times S^2$.

\begin{proof}[Proof of Theorem \ref{thm: uniquness}]
Since $Y$ is symplectic, one can take an $\omega_{\mathrm{FS}}$-tame almost complex structure $J$ on $\CP^3$ which makes $Y$ an almost complex submanifold. 
Then, the tangent bundle $TY$ of $Y$ and the normal bundle $N_{Y/\CP^3}$ in $\CP^3$ become complex vector bundles. 
The splitting 
\[
T\CP^3|_{Y} \cong TY \oplus N_{Y/\CP^3}
\] 
yields the identity between Chern classes of vector bundles: 
\[
	c(T\CP^3|_{Y})=(1+c_1(TY)+c_2(TY))(1+c_1(N_{Y/\CP^3})).
\]
Set $\eta=[\omega_{\mathrm{FS}}|_{Y}] \in H^{2}(Y; \Z)$. 
As $c_1(T\CP^3)=[\omega_{\mathrm{FS}}]$, we have 
\[
c(T\CP^3|_{Y})=(1+\eta)^4.
\] 
Moreover, since $[Y]$ is Poincar\'e dual to $k[\omega_{\mathrm{FS}}]$, the first Chern class $c_1(N_{Y/\CP^3})$ agrees with $k\eta$. 
Thus, 
\[
	c_1(TY)=(4-k)\eta, \quad c_2(TY)=(k^2-4k+6)\eta^2.
\]
In light of Theorem \ref{thm: OO} combined with the above first equality, we conclude that $Y$ is diffeomorphic to a Del Pezzo surface.

To determine the diffeomorphism type of $Y$, we first specify $b_2^{-}(Y)$. 
Since $Y$ is a Del Pezzo surface, $b_1(Y)=b_3(Y)=0$ and $b_2^{+}(Y)=1$. 
Applying the identity 
\[
	c_1^2(TY)=3\sigma(Y)+2\chi(X)=3\sigma(Y)+2\int_{X}c_2(X), 
\]
we find 
\[
	b_2^{-}(Y)=\frac{3(k-1)^2}{k^2-6k+11}=
	\begin{cases}
	0 & (k=1),\\
	1 & (k=2),\\
	6 & (k=3),
	\end{cases}
\]
where $\sigma(Y)$ and $\chi(Y)$ denote the signature and the Euler characteristic of $Y$, respectively. 
Thus, we conclude that $Y$ is diffeomorphic to $\CP^2$ if $k=1$ and diffeomorphic to $\CP^2 \# 6 \overline{\CP^2}$ if $k=3$. 
In the case $k=2$, we still have two possibilities, namely, $S^2 \times S^2$ and $\CP^2 \# \overline{\CP}^2$. 
However, it follows from $b_1(Y)=0$ and $c_1(TY)=2\eta$ that $Y$ carries a spin structure. 
Therefore, $Y$ is diffeomorphic to $S^2 \times S^2$. 
This completes the proof. 
\end{proof}

We remark that for $k=1,2$, the theorem was also proven by Hind \cite{Hind03} via pseudo-holomorphic curves.

\subsection*{Acknowledgements}
The author would like to thank Kimihiko Motegi for asking him an interesting question, which started this work, and Jorge Vit\'{o}rio Pereira for the e-mail correspondence on Totaro's result.  
This work was supported by JSPS KAKENHI Grant Numbers 20K22306, 22K13913.

%------
% Insert the bibliography.
%------


\begin{thebibliography}{99}

\bibitem{ABKP}
J.~Amor\'{o}s, F.~Bogomolov, L.~Katzarkov, and T.~Pantev, Symplectic
  {L}efschetz fibrations with arbitrary fundamental groups. \emph{J.
  Differential Geom.} \textbf{54} (2000), no.~3, 489--545 \MR{1823313}

\bibitem{Aur}
D.~Auroux, Asymptotically holomorphic families of symplectic submanifolds.
  \emph{Geom. Funct. Anal.} \textbf{7} (1997), no.~6, 971--995 \MR{1487750}

\bibitem{ADK}
D.~Auroux, S.~K. Donaldson, and L.~Katzarkov, Luttinger surgery along
  {L}agrangian tori and non-isotopy for singular symplectic plane curves.
  \emph{Math. Ann.} \textbf{326} (2003), no.~1, 185--203 \MR{1981618}

\bibitem{Do}
S.~K. Donaldson, Symplectic submanifolds and almost-complex geometry. \emph{J.
  Differential Geom.} \textbf{44} (1996), no.~4, 666--705 \MR{1438190}

\bibitem{EtPa04}
T.~Etg\"{u} and B.~D. Park, Non-isotopic symplectic tori in the same homology
  class. \emph{Trans. Amer. Math. Soc.} \textbf{356} (2004), no.~9, 3739--3750
  \MR{2055752}

\bibitem{EtPa05}
T.~Etg\"{u} and B.~D. Park, Homologous non-isotopic symplectic tori in a {$K3$}
  surface. \emph{Commun. Contemp. Math.} \textbf{7} (2005), no.~3, 325--340
  \MR{2151863}

\bibitem{EtPa06}
T.~Etg\"{u} and B.~D. Park, Homologous non-isotopic symplectic tori in homotopy
  rational elliptic surfaces. \emph{Math. Proc. Cambridge Philos. Soc.}
  \textbf{140} (2006), no.~1, 71--78 \MR{2197576}

\bibitem{EtPa08}
T.~Etg\"{u} and B.~D. Park, A note on fundamental groups of symplectic torus
  complements in 4-manifolds. \emph{J. Knot Theory Ramifications} \textbf{17}
  (2008), no.~9, 1063--1075 \MR{2457836}

\bibitem{FM}
B.~Farb and D.~Margalit, \emph{A primer on mapping class groups}. Princeton
  Mathematical Series 49, Princeton University Press, Princeton, NJ, 2012
  \MR{2850125}

\bibitem{FS}
R.~Fintushel and R.~J. Stern, Symplectic surfaces in a fixed homology class.
  \emph{J. Differential Geom.} \textbf{52} (1999), no.~2, 203--222 \MR{1758295}

\bibitem{Fuller}
T.~Fuller, Hyperelliptic {L}efschetz fibrations and branched covering spaces.
  \emph{Pacific J. Math.} \textbf{196} (2000), no.~2, 369--393 \MR{1800582}

\bibitem{GanNaSa}
A.~Ghanwat, A.~Nath, and K.~Saha, Relative {LF} embeddings of 4-manifolds.
  \emph{Proc. Indian Acad. Sci. Math. Sci.} \textbf{132} (2022), no.~2, Paper
  No. 52, 18 \MR{4482117}

\bibitem{GhPa}
A.~Ghanwat and D.~M. Pancholi, Embeddings of $4 $--manifolds in
  $\mathbb{CP}^3$. \emph{arXiv preprint arXiv:2002.11299}  (2020)

\bibitem{GS}
R.~E. Gompf and A.~I. Stipsicz, \emph{{$4$}-manifolds and {K}irby calculus}.
  Graduate Studies in Mathematics 20, American Mathematical Society,
  Providence, RI, 1999 \MR{1707327}

\bibitem{HaPa}
C.~S. Hays and B.~D. Park, Non-isotopic symplectic surfaces in product
  4-manifolds. \emph{Trans. Amer. Math. Soc.} \textbf{360} (2008), no.~11,
  5771--5788 \MR{2425690}

\bibitem{Hind03}
R.~Hind, Symplectic hypersurfaces in {${\C}\textrm P^3$}. \emph{Proc. Amer.
  Math. Soc.} \textbf{134} (2006), no.~4, 1205--1211 \MR{2196058}

\bibitem{Kas}
A.~Kas, On the handlebody decomposition associated to a {L}efschetz fibration.
  \emph{Pacific J. Math.} \textbf{89} (1980), no.~1, 89--104 \MR{596919}

\bibitem{Li}
T.-J. Li, Kodaira dimension in low dimensional topology. In \emph{Tsinghua
  lectures in mathematics}, pp. 265--291, Adv. Lect. Math. (ALM) 45, Int.
  Press, Somerville, MA, [2019] \copyright 2019 \MR{3971557}

\bibitem{Mat}
Y.~Matsumoto, Lefschetz fibrations of genus two---a topological approach. In
  \emph{Topology and {T}eichm\"{u}ller spaces ({K}atinkulta, 1995)}, pp.
  123--148, World Sci. Publ., River Edge, NJ, 1996 \MR{1659687}

\bibitem{Moi}
B.~Moishezon, \emph{Complex surfaces and connected sums of complex projective
  planes}. Lecture Notes in Mathematics, Vol. 603, Springer-Verlag, Berlin-New
  York, 1977 \MR{0491730}

\bibitem{OO_SW2}
H.~Ohta and K.~Ono, Notes on symplectic {$4$}-manifolds with {$b^+_2=1$}. {II}.
  \emph{Internat. J. Math.} \textbf{7} (1996), no.~6, 755--770 \MR{1417784}

\bibitem{OS00}
B.~Ozbagci and A.~I. Stipsicz, Noncomplex smooth {$4$}-manifolds with
  genus-{$2$} {L}efschetz fibrations. \emph{Proc. Amer. Math. Soc.}
  \textbf{128} (2000), no.~10, 3125--3128 \MR{1670411}

\bibitem{OS04}
B.~Ozbagci and A.~I. Stipsicz, Contact 3-manifolds with infinitely many {S}tein
  fillings. \emph{Proc. Amer. Math. Soc.} \textbf{132} (2004), no.~5,
  1549--1558 \MR{2053364}

\bibitem{PanPre}
D.~M. Pancholi and F.~Presas, Symplectic embeddings of 4-manifolds via
  lefschetz fibrations. \emph{arXiv preprint arXiv:2110.12950}  (2021)

\bibitem{PaPoVi}
B.~D. Park, M.~Poddar, and S.~Vidussi, Homologous non-isotopic symplectic
  surfaces of higher genus. \emph{Trans. Amer. Math. Soc.} \textbf{359} (2007),
  no.~6, 2651--2662 \MR{2286049}

\bibitem{ST}
B.~Siebert and G.~Tian, On hyperelliptic {$C^\infty$}-{L}efschetz fibrations of
  four-manifolds. \emph{Commun. Contemp. Math.} \textbf{1} (1999), no.~2,
  255--280 \MR{1696101}

\bibitem{Smith01}
I.~Smith, Symplectic submanifolds from surface fibrations. \emph{Pacific J.
  Math.} \textbf{198} (2001), no.~1, 197--205 \MR{1831978}

\bibitem{Totaro}
B.~Totaro, The topology of smooth divisors and the arithmetic of abelian
  varieties. pp. 611--624, 48, 2000 \MR{1786508}

\bibitem{Vid}
S.~Vidussi, Nonisotopic symplectic tori in the fiber class of elliptic
  surfaces. \emph{J. Symplectic Geom.} \textbf{2} (2004), no.~2, 207--218
  \MR{2108374}

\bibitem{Voisin}
C.~Voisin, \emph{Hodge theory and complex algebraic geometry. {I}}. Cambridge
  Studies in Advanced Mathematics 76, Cambridge University Press, Cambridge,
  2002 \MR{1967689}

\end{thebibliography}
\end{document}